\newtheorem{thm}{Theorem}[section]
\newtheorem{prop}[thm]{Proposition}
\newtheorem{cor}[thm]{Corollary}
\newtheorem{lem}[thm]{Lemma}
\theoremstyle{definition}
\newtheorem{defi}[thm]{Definition}
\newtheorem{rem}[thm]{Remark}
\newtheorem{notation}[thm]{Notation}
\newtheorem{ex}[thm]{Example}
\theoremstyle{plain}
\newcommand\iso{\xrightarrow{
   \,\smash{\raisebox{-0.3ex}{\ensuremath{\scriptstyle\sim}}}\,}}
\newcommand{\lla}{\langle\!\langle}
\newcommand{\rra}{\rangle\!\rangle}
\renewcommand{\phi}{\varphi}
\newcommand{\alt}{\mathop{\mathrm{Alt}}}
\newcommand{\nrd}{\mathop{\mathrm{Nrd}}}
\newcommand{\trd}{\mathop{\mathrm{Trd}}}
\newcommand{\car}{\mathop{\mathrm{char}}}
\newcommand{\id}{\mathop{\mathrm{id}}}
\newcommand{\disc}{\mathop{\mathrm{disc}}}
\author{A.-H. Nokhodkar}
\title{On biquaternion algebras with orthogonal involution}
\date{}
\begin{document}
\maketitle
\begin{abstract}
We investigate the pfaffians of decomposable biquaternion algebras with involution of orthogonal type.
In characteristic two, a classification of these algebras in terms of their pfaffians and some other related invariants is studied.
Also, in arbitrary characteristic, a criterion is obtained for an orthogonal involution on a biquaternion algebra to be metabolic.\\
\noindent
\emph{Mathematics Subject Classification:} 11E88, 15A63, 16W10.
\end{abstract}

\section{Introduction}
A biquaternion algebra is a tensor product of two quaternion algebras.
Every biquaternion algebra is a central simple algebra of degree $4$ and exponent $2$ or $1$.
A result proved by A. A. Albert shows that the converse is also true (see \cite[(16.1)]{knus}).
An {\it Albert form} of a biquaternion algebra $A$ is a $6$-dimensional quadratic form with trivial discriminant whose Clifford algebra is isomorphic to $M_2(A)$.
According to \cite[(16.3)]{knus}, two biquaternion algebras over a field $F$ are isomorphic if and only if their Albert forms are similar.

The Albert form of a biquaternion algebra with involution arises naturally~as the quadratic form induced by a {\it pfaffian} (see \cite[(3.3)]{knus5}).
In \cite{knus5}, a pfaffian of certain modules over Azumaya algebras was defined and used to find a decomposition criterion for involutions on a rank $16$ Azumaya algebra, which contains $2$ as a unit.
A similar criterion for involutions on a biquaternion algebra in arbitrary characteristic was also obtained in \cite{parimala}.

It is known that symplectic involutions on a biquaternion algebra $A$ can be classified, up to conjugation, by their {\it pfaffian norms} (see \cite[(16.19)]{knus}).
For orthogonal involutions the situation is a little more complicated.
In characteristic $\neq2$, using \cite[(5.3)]{knus5} one can find a classification of decomposable orthogonal involutions on $A$ in terms of the pfaffian and the {\it pfaffian adjoint} (introduced in \cite{knus5}).
This classification was originally stated in \cite{knus5} for the more general case where $A$ is an Azumaya algebra which contains $2$ as a unit.

In this work we study decomposable biquaternion algebras with ortho\-gonal involution.
We start with some general observations on the pfaffian and the pfaffian adjoint.
For a decomposable orthogonal involution $\sigma$ we consider the pfaffian $q_\sigma$ and certain subsets $\alt(A,\sigma)^+$ and $\alt(A,\sigma)^-$ of $\alt(A,\sigma)$, introduced in \cite{parimala}.
It is shown in (\ref{x2}) that the union of $\alt(A,\sigma)^+$ and
$\alt(A,\sigma)^-$ coincides with the set of all square-central elements in $\alt(A,\sigma)$.
At the end of \S\ref{sec-pfaffian}, we study in more detail the classification of orthogonal involutions on biquaternion algebras in characteristic $\neq2$, obtained in \cite{knus5}.
Although this result is already presented in \cite{knus5}, it is useful to rephrase it to enable comparison with the corresponding result in characteristic $2$ (see (\ref{charn}) and (\ref{char})).

The classification problem in characteristic $2$ is a little more complicated.
Moreover, the results
themselves have some substantial differences in this case.
For example, the restriction $q_\sigma^+$ of $q_\sigma$ to $\alt(A,\sigma)^+$ is totally singular in characteristic $2$, rather than a regular subform of the pfaffian $q_\sigma$.
Considering these remarks, our approach is to study the relation between the form $q_\sigma^+$ and  the {\it Pfister invariant} of $(A,\sigma)$, introduced in \cite{dolphin3}.
This  relation is used  in (\ref{char}) to obtain necessary and sufficient conditions for orthogonal involutions to be conjugate to each other.

Finally, we study in \S\ref{sec-met} metabolic involutions on biquaternion algebras.
Using some results of previous sections,
we obtain various criteria for an orthogonal involution on a biquaternion algebra to be metabolic (see (\ref{hyp}) and (\ref{final})).
As a final application, we shall see in (\ref{m4}) how the pfaffian can be used to characterize the transpose involution on a split biquaternion algebra.

\section{Preliminaries}
Let $V$ be a finite dimensional vector space over a field $F$.
A {\it quadratic form} over $F$ is a map $q:V\rightarrow F$ such that
$(i)$ $q(av)=a^2q(v)$ for every $a\in F$ and $v\in V$;
$(ii)$ the map $\mathfrak{b}_q:V\times V\rightarrow F$ defined by $\mathfrak{b}_q(u,v)=q(u+v)-q(u)-q(v)$
is a bilinear form.
The map $\mathfrak{b}_q$ is called the {\it polar form} of $q$.
Note that for every $v\in V$ we have $\mathfrak{b}_q(v,v)=2q(v)$.
In particular, if $\car F=2$, then $\mathfrak{b}_q(v,v)=0$ for all $v\in V$, i.e., $\mathfrak{b}_q$ is an
{\it alternating} form.
The {\it orthogonal complement} of a subspace $W\subseteq V$ is defined as $W^\perp=\{x\in V\mid b_q(x,y)=0\ {\rm for\ all}\ y\in W\}$.

A quadratic form $q$ (resp. a bilinear form $\mathfrak{b}$) on $V$ is called {\it isotropic} if there exists a nonzero vector $v\in V$ such that $q(v)=0$ (resp. $\mathfrak{b}(v,v)=0$).
For $\alpha\in F$, we say that $q$ (resp. $\mathfrak{b}$) {\it represents} $\alpha$ if there exists a nonzero vector $v\in V$ such that $q(v)=\alpha$ (resp. $\mathfrak{b}(v,v)=\alpha$).
 The sets of all elements of $F$ represented by $q$ and $\mathfrak{b}$ are denoted by $D_F(q)$ and $D_F(\mathfrak{b})$
 respectively.
For $\alpha\in F^\times$, the {\it scaled} quadratic form $\alpha\cdot q$ is defined as $\alpha\cdot q(v)=\alpha q(v)$ for every $v\in V$.

For $a_1,\cdots,a_n\in F$, the isometry class of the quadratic form $\sum_{i=1}^na_ix_i^2$ is denoted by $\langle a_1,\cdots,a_n\rangle_q$.
Also, the isometry class of the bilinear form
$\sum_{i=1}^na_ix_iy_i$ is denoted by $\langle a_1,\cdots,a_n\rangle$.
Finally, the form $\lla a_1,\cdots,a_n\rra:=\langle 1,a_1\rangle\otimes\cdots\otimes\langle 1,a_n\rangle$ is called a {\it bilinear $n$-fold Pfister form}.

An {\it involution} on a central simple $F$-algebra $A$ is an antiautomorphism $\sigma$ of $A$ of order $2$.
We say that $\sigma$ is {\it  of the first kind} if $\sigma|_F=\id$.
An involution $\sigma$ of the first kind is said to be of {\it symplectic} if over a splitting field of $A$, it becomes adjoint to an alternating bilinear form.
Otherwise, $\sigma$ is called {\it orthogonal}.
The set of {\it alternating elements} of $A$ is defined as
$\alt(A,\sigma)=\{a-\sigma(a) \mid a\in A\}$.
If $A$ is of even degree $2m$, the {\it discriminant} of an orthogonal involution $\sigma$ on $A$ is defined as $\disc\sigma=(-1)^m\nrd_A(x) F^{\times2}\in F^\times/F^{\times2}$, where  $x\in\alt(A,\sigma)$ is a unit and $\nrd_A(x)$ is the reduced norm of $x$  in $A$.
Note that by \cite[(7.1)]{knus}, the discriminant does not depend on the choice of $x\in\alt(A,\sigma)$.

A {\it quaternion algebra} over a field $F$ is a central simple algebra $Q$ of degree $2$.
The {\it canonical} involution $\gamma$ on $Q$ is defined by $\gamma(x)=\trd_Q(x)-x$ for $x\in Q$, where $\trd_A(x)$ is the reduced trace of $x$  in $A$.
The canonical involution on $Q$ is the unique involution of symplectic type on $Q$ and it satisfies $\gamma(x)x\in F$ for every $x\in Q$ (see \cite[Ch. 2]{knus}).
The map $N_Q:Q\rightarrow F$ defined by $N_Q(x)=\gamma(x)x$ is called the {\it norm form} of $Q$.
An element $x\in Q$ is called a {\it pure quaternion} if $\trd_Q(x)=0$.
The set of all pure quaternions of $Q$ is a $3$-dimensional subspace of $Q$ denoted by $Q_0$.
Note that an element $x\in Q$ lies in $Q_0$ if and only if $\gamma(x)=-x$, or equivalently, $N_Q(x)=-x^2$.

A central simple $F$-algebra with involution $(A,\sigma)$ is called {\it totally decomposable} if it decomposes as a tensor product of $\sigma$-invariant quaternion $F$-algebras.
If $A$ is a biquaternion algebra, we will use the term {\it decomposable} instead of totally decomposable.
Note that a biquaternion algebra with orthogonal involution $(A,\sigma)$ is decomposable if and only if
$\disc\sigma$ is trivial (see \cite[(3.7)]{parimala}).

\section{The pfaffian and the pfaffian adjoint}\label{sec-pfaffian}
We begin our discussion by looking at special cases of \cite[(2.1)]{knus2} and \cite[(3.1)]{knus2}.
\begin{thm}\label{pfsigma}
Let $(A,\sigma)$ be a biquaternion algebra with orthogonal involution over a field $F$ and let $d_\sigma\in F^\times$ be a representative of the class $\disc\sigma\in F^\times/F^{\times2}$.
There exists a map $pf_\sigma:\alt(A,\sigma)\rightarrow F$ such that
$pf_\sigma(x)^2=d_\sigma{\nrd}_A(x)$ for every $x\in \alt(A,\sigma)$.
The map $pf_\sigma$ is uniquely determined up to a sign.
Moreover, there exists an $F$-linear map $\pi_\sigma:\alt(A,\sigma)\rightarrow \alt(A,\sigma)$ such that $x\pi_\sigma(x)=\pi_\sigma(x)x=pf_\sigma(x)$ and $\pi_\sigma^2(x)=d_\sigma x$ for every $x\in \alt(A,\sigma)$.
\end{thm}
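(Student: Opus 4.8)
The plan is to reduce everything to the generic situation and then extend by specialization. First I would pass to a splitting field: over an extension $L/F$ that splits $A$, we have $(A_L,\sigma_L)\cong(\End_L V,\sigma_h)$ for a nonsingular symmetric bilinear form $h$ of rank $4$, and $\alt(A_L,\sigma_L)$ is identified with a space of rank $6$ carrying a quadratic form that, after the identification $\bigwedge^2 V\cong\alt$, is (a scalar multiple of) the Pfaffian of a skew-symmetric matrix — the classical case. Over such a splitting field the map $pf_\sigma$ is the usual Pfaffian (suitably scaled so that $pf_\sigma(x)^2=d_\sigma\nrd_A(x)$), and $\pi_\sigma$ is the "Pfaffian adjoint" sending a skew matrix $x$ to the skew matrix whose entries are the complementary $2\times2$ Pfaffian minors, so that $x\pi_\sigma(x)=\pi_\sigma(x)x=pf_\sigma(x)$ and $\pi_\sigma^2(x)=d_\sigma x$ are verified by a direct $4\times4$ computation.

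The substance is then a descent/specialization argument, exactly as in \cite[(2.1)]{knus2} and \cite[(3.1)]{knus2}, which I would invoke. One considers the generic alternating element: take a basis $e_1,\dots,e_6$ of $\alt(A,\sigma)$ over $F$, form $\xi=\sum_i t_i e_i$ over the rational function field $F(t_1,\dots,t_6)$, and compute $\nrd_A(\xi)$, which lies in $F[t_1,\dots,t_6]$ and is a homogeneous polynomial of degree $4$. The key algebraic input is that $d_\sigma\nrd_A(\xi)$ is a perfect square in the polynomial ring; this is exactly what the cited results of \cite{knus2} provide (it is the statement that $\nrd$ restricted to $\alt$ is, up to the discriminant class, the square of a quadratic form — the Pfaffian). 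Defining $pf_\sigma$ on the generic element as the square root $p(t_1,\dots,t_6)$, which is a quadratic form, and then specializing $t_i\mapsto$ coordinates of an arbitrary $x$, one obtains the desired map $pf_\sigma:\alt(A,\sigma)\to F$ with $pf_\sigma(x)^2=d_\sigma\nrd_A(x)$. Uniqueness up to sign is immediate: two square roots of the same nonzero element of a domain differ by $\pm1$, and since $\alt(A,\sigma)$ contains units (as $\sigma$ is orthogonal of even degree, cf. the discussion of $\disc\sigma$), $pf_\sigma$ is not identically zero, so a global sign is the only ambiguity.

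For $\pi_\sigma$, I would again work generically. Over the splitting field the adjoint $\tilde x$ of a skew matrix $x$ satisfies $x\tilde x=\tilde x x=pf(x)\cdot 1$ and $\tilde{\tilde x}=pf(x)\,x$ hence $\pi^2(x)=d_\sigma x$ after rescaling. Since these are polynomial identities in the entries of the generic $\xi$, and the construction of $\tilde\xi$ is itself polynomial (so it descends from $F(t_1,\dots,t_6)$ to $F[t_1,\dots,t_6]$), the map $\xi\mapsto\tilde\xi$ is defined over $F$; the identity $\xi\tilde\xi=pf_\sigma(\xi)$ forces $\tilde\xi$ to be $F$-linear in $\xi$ (each entry of $\tilde\xi$ is a polynomial of degree $\le 3$ in the $t_i$, but the relation $\xi\tilde\xi=pf_\sigma(\xi)$ together with invertibility of generic $\xi$ pins down $\tilde\xi=pf_\sigma(\xi)\xi^{-1}$, which one checks is forced to be linear), and one checks $\tilde\xi\in\alt(A,\sigma)$ because $\sigma(\tilde\xi)=\widetilde{\sigma(\xi)}=-\tilde\xi$ by naturality of the construction. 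Specializing, we get $\pi_\sigma$ on all of $\alt(A,\sigma)$ with the two stated identities. The main obstacle is the perfect-square claim for $d_\sigma\nrd_A(\xi)$ in the polynomial ring — but this is precisely the content of the special cases of \cite[(2.1), (3.1)]{knus2} we are instructed to quote, so in this paper it is a matter of unwinding those statements rather than proving them afresh.
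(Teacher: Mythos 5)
The paper gives no proof of this theorem at all---it is stated purely as a special case of \cite[(2.1)]{knus2} and \cite[(3.1)]{knus2}---and your argument ultimately rests on exactly those citations for the key perfect-square claim, so it is essentially the same approach, with a correct sketch of the splitting-field/generic-element reasoning that lies behind the cited results. The one point worth tightening is that in characteristic $2$ the identity $\sigma(\tilde\xi)=-\tilde\xi$ only shows that $\tilde\xi$ is a symmetric element, not that it lies in the $6$-dimensional subspace $\alt(A,\sigma)$; there one should instead verify in the split model that the Pfaffian adjoint of an alternating (zero-diagonal) matrix is again alternating, which then descends by the same polynomial-identity argument.
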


\begin{rem}\label{inve}
The map $\pi_\sigma$ in (\ref{pfsigma}) is uniquely determined by $pf_\sigma$.
Indeed, it is easily seen by scalar extension to a splitting field that $\alt(A,\sigma)$ has a basis $\mathcal{B}$ consisting of invertible elements.
For every $x\in\mathcal{B}$, we must have $\pi_\sigma(x)\penalty 0 =x^{-1}pf_\sigma(x)$.
As $\pi_\sigma$ is $F$-linear, it is uniquely defined on $\alt(A,\sigma)$.
\end{rem}

\begin{defi}
A map $pf_\sigma$ as in (\ref{pfsigma}) is called a {\it pfaffian of} $(A,\sigma)$.
We also call the map $\pi_\sigma$, the {\it pfaffian adjoint}
 of $pf_\sigma$.
\end{defi}
Note that by \cite[(3.3)]{knus5}, every pfaffian of $(A,\sigma)$ is an Albert form of $A$.

\begin{notation}\label{not}
Let $(A,\sigma)$ be a decomposable biquaternion algebra with ortho\-gonal involution over a field $F$.
Since $\disc\sigma$ is trivial, by (\ref{pfsigma}) there is a unique, up to a sign, pfaffian $pf_\sigma$ satisfying
$pf_\sigma(x)^2={\nrd}_A(x)$ for $x\in\alt(A,\sigma)$.
We denote this pfaffian by $q_\sigma$.
We also denote by $p_\sigma$ the pfaffian adjoint of $q_\sigma$,~hence
\[q_\sigma(x)^2={\nrd}_A(x),\quad xp_\sigma(x)=p_\sigma(x)x=q_\sigma(x)\quad {\rm and} \quad p_\sigma^2(x)= x,\]
for every $x\in\alt(A,\sigma)$.
We also use the following notation:
\begin{align*}
\alt(A,\sigma)^+&:=\{x+p_\sigma(x)\mid x\in\alt(A,\sigma)\},\\
\alt(A,\sigma)^-&:=\{x-p_\sigma(x)\mid x\in\alt(A,\sigma)\}.
\end{align*}
\end{notation}
Note that if $\car F=2$, then $\alt(A,\sigma)^+=\alt(A,\sigma)^-$.
Also, as proved in \cite[p. 597]{knus5} and \cite[(3.5)]{parimala}, $\alt(A,\sigma)^+$ and $\alt(A,\sigma)^-$ are $3$-dimensional subspaces of $\alt(A,\sigma)$.
Since $p_\sigma^2=\id$, we have $p_\sigma(x)=x$
for every $x\in\alt(A,\sigma)^+$ and $p_\sigma(x)=-x$
for every $x\in\alt(A,\sigma)^-$.
The converse is also true, i.e.,
\begin{align}\label{eq5}
\alt(A,\sigma)^+&=\{x\in\alt(A,\sigma)\mid p_\sigma(x)=x\},\\
\alt(A,\sigma)^-&=\{x\in\alt(A,\sigma)\mid p_\sigma(x)=-x\}.\label{eq7}
\end{align}
Indeed, if $\car F\neq2$, then for every $x\in \alt(A,\sigma)$ with $p_\sigma(x)=x$ we have
$x=\frac{1}{2}(x+p_\sigma(x))\in\alt(A,\sigma)^+$.
Similarly if $p_\sigma(x)=-x$, then
$x=\frac{1}{2}(x-p_\sigma(x))\in\alt(A,\sigma)^-$.
If $\car F=2$, then the relation (\ref{eq5}) follows from the dimension formula for the image and the kernel of the linear map $p_\sigma+\id$.

The next result is implicitly contained in \cite[pp. 249-250]{knus}.
\begin{lem}\label{iso}
Let $(A,\sigma)$ be a decomposable biquaternion algebra with ortho\-gonal involution over a field $F$.
Then $p_\sigma$ is an isometry of $(\alt(A,\sigma),q_\sigma)$.
Furthermore, $\mathfrak{b}_{q_\sigma}(x,y)=xp_\sigma(y)+yp_\sigma(x)$, for $x,y\in\alt(A,\sigma)$.
\end{lem}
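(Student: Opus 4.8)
The plan is to verify both assertions after scalar extension to a splitting field, using the fact that they are polynomial identities in the coordinates of $x$ and $y$. First I would reduce to the split case: if $L/F$ is a field extension splitting $A$, then $(A_L,\sigma_L)$ is still a decomposable biquaternion algebra with orthogonal involution, $\alt(A,\sigma)\otimes_F L = \alt(A_L,\sigma_L)$, and by the uniqueness clause of (\ref{pfsigma}) together with (\ref{inve}) the pfaffian $q_\sigma$ and its adjoint $p_\sigma$ extend to $q_{\sigma_L}$ and $p_{\sigma_L}$ (up to the harmless overall sign). Both claimed identities — that $q_\sigma(p_\sigma(x)) = q_\sigma(x)$ and that $\mathfrak b_{q_\sigma}(x,y) = xp_\sigma(y) + yp_\sigma(x)$ — are identities of polynomial functions on the $F$-vector space $\alt(A,\sigma)$, so it suffices to prove them over $L$, i.e. we may assume $A = M_4(F)$ (or, more conveniently, $A = M_2(F)\otimes M_2(F)$).

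In the split case I would take the concrete model of $(M_4(F),\sigma)$ as the adjoint involution of a hyperbolic or split quadratic space of dimension $4$ with trivial discriminant, matching the description used in \cite[pp. 249--250]{knus}. There one has an explicit basis of $\alt(A,\sigma)$ on which $\nrd_A$ restricts to an Albert form, $q_\sigma$ is (up to sign) the pfaffian written down in Chapter~II of \cite{knus}, and $p_\sigma$ is given by an explicit linear map (essentially a "bar" or Hodge-star type operation on $\Lambda^2$ of the $4$-dimensional space). With these formulas in hand, the isometry statement $q_\sigma\circ p_\sigma = q_\sigma$ follows because $\nrd_A(p_\sigma(x)) = \nrd_A(x)$: indeed $p_\sigma^2(x) = x$ forces $\nrd_A(p_\sigma(x))^2 = \nrd_A(p_\sigma^2(x))\nrd_A(\text{something})$, but more directly, squaring the defining relation $p_\sigma^2 = \id$ and using multiplicativity of $\nrd_A$ on the subalgebra generated by $x$ and $p_\sigma(x)$ gives $\nrd_A(p_\sigma(x)) = \nrd_A(x)$, hence $q_\sigma(p_\sigma(x))^2 = q_\sigma(x)^2$; a sign/continuity (or specialization) argument on the connected split model then removes the ambiguity and yields $q_\sigma(p_\sigma(x)) = q_\sigma(x)$.

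For the polar form identity, I would polarize the relation $x p_\sigma(x) = q_\sigma(x)$ (here $q_\sigma(x) \in F\cdot 1 \subseteq A$ is central). Replacing $x$ by $x+y$ and expanding, using $F$-linearity of $p_\sigma$, gives
\[
(x+y)p_\sigma(x+y) = xp_\sigma(x) + xp_\sigma(y) + yp_\sigma(x) + yp_\sigma(y) = q_\sigma(x+y),
\]
so $xp_\sigma(y) + yp_\sigma(x) = q_\sigma(x+y) - q_\sigma(x) - q_\sigma(y) = \mathfrak b_{q_\sigma}(x,y)$, which is exactly the claim; this part needs no case distinction on the characteristic and does not even require the split reduction. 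The one point that needs care is that $q_\sigma$ really is a quadratic form with polar form $\mathfrak b_{q_\sigma}$ in the sense of the Preliminaries — this is guaranteed by the remark following (\ref{pfsigma}) that every pfaffian of $(A,\sigma)$ is an Albert form of $A$, hence in particular a quadratic form.

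The main obstacle I anticipate is the sign bookkeeping in the first assertion: the pfaffian is only defined up to an overall sign, and $p_\sigma$ is pinned down relative to a chosen sign of $q_\sigma$ via $xp_\sigma(x) = q_\sigma(x)$, so to conclude $q_\sigma\circ p_\sigma = +q_\sigma$ (rather than $\pm q_\sigma$ pointwise) one must check that the sign is globally consistent. This is handled cleanly in the split case: $\alt(A,\sigma)$ then contains a Zariski-dense set of invertible elements, on each of which $q_\sigma(p_\sigma(x)) = x p_\sigma^2(x) p_\sigma(x) x^{-1}\cdot(\cdots)$ — more simply, $q_\sigma(p_\sigma(x)) = p_\sigma(x) p_\sigma(p_\sigma(x)) = p_\sigma(x)\cdot x = q_\sigma(x)$, using $p_\sigma^2 = \id$ and $x p_\sigma(x) = p_\sigma(x) x$ — so in fact the identity $q_\sigma(p_\sigma(x)) = q_\sigma(x)$ holds on the nose for all $x$ with no sign ambiguity, and by density (or by $F$-linearity of $p_\sigma$ combined with the quadratic nature of both sides) for all $x\in\alt(A,\sigma)$. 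So the "hard part" largely evaporates once one writes $q_\sigma(p_\sigma(x)) = p_\sigma(x)p_\sigma(p_\sigma(x))$ and invokes $p_\sigma^2=\id$; the remaining work is just assembling the explicit split model to justify that these manipulations are legitimate.
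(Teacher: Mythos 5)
Your proof is correct and, once the scaffolding falls away, is exactly the paper's argument: the isometry claim is the one-line computation $q_\sigma(p_\sigma(x))=p_\sigma(x)p_\sigma(p_\sigma(x))=p_\sigma(x)x=q_\sigma(x)$ from the defining relations $yp_\sigma(y)=p_\sigma(y)y=q_\sigma(y)$ and $p_\sigma^2=\id$, and the polar-form identity is the polarization of $xp_\sigma(x)=q_\sigma(x)$. The reduction to a splitting field, the explicit split model, and the density/sign discussion are all unnecessary, as you yourself note at the end.
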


\begin{proof}
For every $x\in\alt(A,\sigma)$ we have
$q_\sigma(p_\sigma(x))=p_\sigma(p_\sigma(x))p_\sigma(x)=xp_\sigma(x)=q_\sigma(x)$.
Thus, $p_\sigma$ is an isometry.
The second assertion is easily obtained from the relations
$q_\sigma(x)=xp_\sigma(x)$ and $\mathfrak{b}_{q_\sigma}(x,y)=q_\sigma(x+y)-q_\sigma(x)-q_\sigma(y)$.
\end{proof}

\begin{lem}\label{rema}
Let $(A,\sigma)$ be a decomposable biquaternion algebra with ortho\-gonal involution over a field $F$.
Then $\alt(A,\sigma)^+=(\alt(A,\sigma)^-)^\perp\subseteq C_A(\alt(A,\sigma)^-)$.
\end{lem}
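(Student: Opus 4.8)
The plan is to prove the three parts in turn: first the orthogonality $\alt(A,\sigma)^{+}\subseteq(\alt(A,\sigma)^{-})^{\perp}$, then the commutation $\alt(A,\sigma)^{+}\subseteq C_{A}(\alt(A,\sigma)^{-})$, and finally the reverse inclusion $(\alt(A,\sigma)^{-})^{\perp}\subseteq\alt(A,\sigma)^{+}$ by a dimension count.

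For the orthogonality I would use only that $p_{\sigma}$ is an isometry of $(\alt(A,\sigma),q_{\sigma})$ with $p_{\sigma}^{2}=\id$ (by (\ref{iso}) and (\ref{not})). Given $u\in\alt(A,\sigma)^{+}$ and $v\in\alt(A,\sigma)^{-}$, write $u=x+p_{\sigma}(x)$ and $v=y-p_{\sigma}(y)$ with $x,y\in\alt(A,\sigma)$ and expand by bilinearity:
\[\mathfrak{b}_{q_{\sigma}}(u,v)=\mathfrak{b}_{q_{\sigma}}(x,y)-\mathfrak{b}_{q_{\sigma}}(x,p_{\sigma}(y))+\mathfrak{b}_{q_{\sigma}}(p_{\sigma}(x),y)-\mathfrak{b}_{q_{\sigma}}(p_{\sigma}(x),p_{\sigma}(y)).\]
Since $p_{\sigma}$ preserves $\mathfrak{b}_{q_{\sigma}}$ one has $\mathfrak{b}_{q_{\sigma}}(p_{\sigma}(x),p_{\sigma}(y))=\mathfrak{b}_{q_{\sigma}}(x,y)$, and, using also $p_{\sigma}^{2}=\id$, $\mathfrak{b}_{q_{\sigma}}(p_{\sigma}(x),y)=\mathfrak{b}_{q_{\sigma}}(p_{\sigma}(x),p_{\sigma}(p_{\sigma}(y)))=\mathfrak{b}_{q_{\sigma}}(x,p_{\sigma}(y))$; so the four terms cancel in pairs and $\mathfrak{b}_{q_{\sigma}}(u,v)=0$. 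I would use this computation rather than the quicker remark $\mathfrak{b}_{q_{\sigma}}(u,v)=\mathfrak{b}_{q_{\sigma}}(p_{\sigma}(u),p_{\sigma}(v))=-\mathfrak{b}_{q_{\sigma}}(u,v)$, precisely because it stays valid in characteristic $2$, where $\alt(A,\sigma)^{+}=\alt(A,\sigma)^{-}$. This gives $\alt(A,\sigma)^{+}\subseteq(\alt(A,\sigma)^{-})^{\perp}$.

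For the commutation, note that $p_{\sigma}(u)=u$ and $p_{\sigma}(v)=-v$ for $u\in\alt(A,\sigma)^{+}$, $v\in\alt(A,\sigma)^{-}$, by (\ref{eq5}) and (\ref{eq7}); hence the second formula of (\ref{iso}) reads $\mathfrak{b}_{q_{\sigma}}(u,v)=u\,p_{\sigma}(v)+v\,p_{\sigma}(u)=vu-uv$. By the orthogonality just proved this is $0$, so $uv=vu$ for all such $u,v$, i.e. every element of $\alt(A,\sigma)^{+}$ centralizes $\alt(A,\sigma)^{-}$, which is the inclusion $\alt(A,\sigma)^{+}\subseteq C_{A}(\alt(A,\sigma)^{-})$.

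Finally I would upgrade $\alt(A,\sigma)^{+}\subseteq(\alt(A,\sigma)^{-})^{\perp}$ to an equality by comparing dimensions: $\dim_{F}\alt(A,\sigma)=6$ and $\dim_{F}\alt(A,\sigma)^{+}=\dim_{F}\alt(A,\sigma)^{-}=3$, so once one knows that the polar form $\mathfrak{b}_{q_{\sigma}}$ is nondegenerate on $\alt(A,\sigma)$ it follows that $\dim_{F}(\alt(A,\sigma)^{-})^{\perp}=6-3=3$, forcing equality with $\alt(A,\sigma)^{+}$. This nondegeneracy is the only real obstacle. For $\car F\neq2$ it is immediate, since a $6$-dimensional Albert form is regular; moreover $\alt(A,\sigma)=\alt(A,\sigma)^{+}\perp\alt(A,\sigma)^{-}$ is then an orthogonal direct sum of eigenspaces of $p_{\sigma}$, so $q_{\sigma}|_{\alt(A,\sigma)^{-}}$ has trivial radical directly. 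For $\car F=2$ one must instead invoke that the pfaffian $q_{\sigma}$, being an Albert form of the biquaternion algebra $A$ (by \cite[(3.3)]{knus5}), is a nonsingular quadratic form, so that $\mathfrak{b}_{q_{\sigma}}$ is again nondegenerate; then the $3$-dimensional totally isotropic subspace $\alt(A,\sigma)^{+}=\alt(A,\sigma)^{-}$ must equal its own orthogonal complement, which gives the claim.
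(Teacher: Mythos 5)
Your proof is correct and follows essentially the same route as the paper: use the isometry property of $p_\sigma$ from (\ref{iso}) to get $\alt(A,\sigma)^+\subseteq(\alt(A,\sigma)^-)^\perp$, deduce the commutation from the formula $\mathfrak{b}_{q_\sigma}(u,v)=up_\sigma(v)+vp_\sigma(u)$, and finish by a dimension count. The only difference is that you explicitly justify the dimension count by noting that the polar form of the Albert form $q_\sigma$ is nondegenerate in every characteristic — a point the paper leaves implicit — which is a worthwhile clarification.
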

\begin{proof}
Let $\mathfrak{b}=\mathfrak{b}_{q_\sigma}$ and let $x\in\alt(A,\sigma)^+$.
By (\ref{iso}), $p_\sigma$ is an isometry of $(\alt(A,\sigma),q_\sigma)$, hence
$\mathfrak{b}(x,y)=\mathfrak{b}(p_\sigma(x),p_\sigma(y))=\mathfrak{b}(x,p_\sigma(y))$ for every $y\in\alt(A,\sigma)$.
Thus, $\mathfrak{b}(x,y-p_\sigma(y))=0$, i.e., $\alt(A,\sigma)^+\subseteq(\alt(A,\sigma)^-)^\perp$.
By dimension count we obtain $\alt(A,\sigma)^+=(\alt(A,\sigma)^-)^\perp$.
Now let $z\in\alt(A,\sigma)^-$.
By (\ref{iso}) we have $0=\mathfrak{b}(x,z)=-xz+zx$.
Thus, $xz=zx$, which implies that $\alt(A,\sigma)^+$ commutes with $\alt(A,\sigma)^-$, i.e., $\alt(A,\sigma)^+\subseteq C_A(\alt(A,\sigma)^-)$.
\end{proof}
\begin{lem}\label{psigma}
Let $(A,\sigma)$ be a decomposable biquaternion algebra with ortho\-gonal involution over a field $F$ and let $x\in\alt(A,\sigma)$.
If $x^2\in F$, then $p_\sigma(x)=\pm x$.
\end{lem}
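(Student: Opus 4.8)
The plan is to reduce immediately to the case $x\neq0$ and then to distinguish according to whether or not $x$ is invertible. If $x=0$ there is nothing to prove, so assume $x\neq0$. Since $\sigma$ is orthogonal we have $1\notin\alt(A,\sigma)$, and as $\alt(A,\sigma)$ is an $F$-subspace this forces $\alt(A,\sigma)\cap F=0$; hence $x\notin F$. Therefore the minimal polynomial of $x$ over $F$ is $T^{2}-x^{2}$, the reduced characteristic polynomial of $x$ in $A$ is $(T^{2}-x^{2})^{2}$, and $\nrd_A(x)=(x^{2})^{2}$. By (\ref{not}) we then have $q_\sigma(x)^{2}=\nrd_A(x)=(x^{2})^{2}$, so $q_\sigma(x)=\pm x^{2}$. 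If $x$ is invertible, i.e., $x^{2}\in F^{\times}$, then $x\,p_\sigma(x)=q_\sigma(x)=\pm x^{2}$ gives $p_\sigma(x)=\pm x$, and we are done.

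Next I would treat the case $x^{2}=0$, $x\neq0$. Put $y=p_\sigma(x)$. Then $q_\sigma(x)=0$, so $xy=yx=0$. Using (\ref{iso}) together with $p_\sigma(y)=p_\sigma^{2}(x)=x$ we get $\mathfrak{b}_{q_\sigma}(x,y)=x\,p_\sigma(y)+y\,p_\sigma(x)=x^{2}+y^{2}=y^{2}$, hence $y^{2}=\mathfrak{b}_{q_\sigma}(x,y)\in F$. If $y^{2}\neq0$, then $(x+y)^{2}=x^{2}+xy+yx+y^{2}=y^{2}\in F^{\times}$, so $x+y$ is invertible, while $x(x+y)=x^{2}+xy=0$ forces $x=0$, a contradiction. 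Hence $y^{2}=0$ as well, and then (by the definition of $\alt(A,\sigma)^{\pm}$ in (\ref{not}) and by (\ref{eq5}), (\ref{eq7})) the elements $u:=x+y$ and $w:=x-y$ lie in $\alt(A,\sigma)^{+}$ and $\alt(A,\sigma)^{-}$ respectively, and satisfy $u^{2}=w^{2}=0$ and $uw=wu=0$.

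To finish it suffices to show that $u=0$ or $w=0$: then $x=\frac12(u+w)$ lies in $\alt(A,\sigma)^{+}$ or in $\alt(A,\sigma)^{-}$, so $p_\sigma(x)=\pm x$ by (\ref{eq5}), (\ref{eq7}); and if $\car F=2$ then $u=w$ and the same reasoning places $x$ in $\alt(A,\sigma)^{+}=\alt(A,\sigma)^{-}$. For this I would use a decomposition $A=Q_{1}\otimes_{F}Q_{2}$ into $\sigma$-invariant quaternion subalgebras with $\alt(A,\sigma)^{+}=(Q_{1})_{0}\otimes 1$ and $\alt(A,\sigma)^{-}=1\otimes(Q_{2})_{0}$. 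When $\car F\neq2$ such a decomposition may be built from the data at hand: by (\ref{rema}) the restriction of the (nondegenerate) form $q_\sigma$ to $\alt(A,\sigma)^{+}$ is again nondegenerate, so $\alt(A,\sigma)^{+}$ has an orthogonal basis $u_{1},u_{2},u_{3}$ of invertible, square-central, pairwise anticommuting elements; then $u_{1},u_{2}$ generate a quaternion subalgebra $Q_{1}$, the element $u_{1}u_{2}$ is alternating, invertible and square-central, so $p_\sigma(u_{1}u_{2})=\pm u_{1}u_{2}$ by the invertible case (the sign being $+$, since a $-$ sign would put $u_{1}u_{2}$ in $\alt(A,\sigma)^{-}$, forced by (\ref{rema}) to commute with $u_{1}\in\alt(A,\sigma)^{+}$, contradicting $u_{1}u_{2}=-u_{2}u_{1}$), whence $(Q_{1})_{0}=Fu_{1}+Fu_{2}+Fu_{1}u_{2}=\alt(A,\sigma)^{+}$; finally $\alt(A,\sigma)^{-}\subseteq C_A((Q_{1})_{0})=Q_{2}$ is a $3$-dimensional trace-free subspace, so $\alt(A,\sigma)^{-}=(Q_{2})_{0}$. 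Writing $u=u'\otimes 1$ and $w=1\otimes w'$ with $u'\in(Q_{1})_{0}$, $w'\in(Q_{2})_{0}$, the relation $uw=0$ becomes $u'\otimes w'=0$ in the tensor product $Q_{1}\otimes_{F}Q_{2}$ over the field $F$, forcing $u'=0$ or $w'=0$, i.e., $u=0$ or $w=0$.

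The step I expect to be the main obstacle is exactly this last, degenerate case $x^{2}=p_\sigma(x)^{2}=0$: it admits no direct reduced-norm argument and really needs the identification of $\alt(A,\sigma)^{+}$ and $\alt(A,\sigma)^{-}$ with the pure-quaternion spaces of two mutually centralizing quaternion subalgebras. A second delicate point is characteristic $2$, where the restriction $q_\sigma^{+}$ is only totally singular and $\alt(A,\sigma)^{+}=\alt(A,\sigma)^{-}$, so that the construction of this decomposition — and hence the argument above — has to be redone using the structure of decomposable orthogonal involutions in characteristic $2$ (cf.\ \cite{parimala,dolphin3}).
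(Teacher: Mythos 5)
Your treatment of the invertible case and your reduction of the nilpotent case to the two elements $u=x+p_\sigma(x)\in\alt(A,\sigma)^+$ and $w=x-p_\sigma(x)\in\alt(A,\sigma)^-$ with $u^2=w^2=uw=0$ are correct, and in characteristic $\neq2$ your tensor-product argument does close the proof. There, however, you are essentially re-deriving Lemma \ref{q}(1) (that $Q^+$ and $Q^-$ are mutually centralizing $\sigma$-invariant quaternion subalgebras with $Q^\pm_0=\alt(A,\sigma)^\pm$), which rests only on \cite{parimala} and could be invoked directly; granting that, the conclusion $u'\otimes w'=0\Rightarrow u'=0$ or $w'=0$ is a clean and legitimate finish, different in spirit from the paper's.

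The genuine gap is characteristic $2$, which you explicitly leave ``to be redone.'' This is not a routine adaptation: in characteristic $2$ one has $\alt(A,\sigma)^+=\alt(A,\sigma)^-$, so $u=w$, and by Lemma \ref{q}(2) the algebra $Q^+=Q^-$ is a maximal \emph{commutative} subalgebra; there is no decomposition with $\alt(A,\sigma)^+=(Q_1)_0\otimes1$ and $\alt(A,\sigma)^-=1\otimes(Q_2)_0$, so the identity $u'\otimes w'=0$ that drives your conclusion simply does not exist. Moreover, knowing only that $u=x+p_\sigma(x)$ lies in $\alt(A,\sigma)^+$ with $u^2=0$ does not force $u=0$, since $\alt(A,\sigma)^+$ can contain nonzero nilpotents (e.g.\ when $Q^+$ is not a field). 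The paper avoids all of this with a characteristic-free computation: assuming $p_\sigma(x)\neq x$, so that $x\notin\alt(A,\sigma)^+=(\alt(A,\sigma)^-)^\perp$ by Lemma \ref{rema}, one picks $w\in\alt(A,\sigma)^-$ with $\mathfrak{b}_{q_\sigma}(x,w)=1$, i.e.\ $-xw+wp_\sigma(x)=1$ by Lemma \ref{iso}; multiplying on the left by $x$ and using $p_\sigma(x)^2=0$ yields $p_\sigma(x)=-x$, which settles characteristic $\neq2$ and gives a contradiction in characteristic $2$. Some variant of this (or another argument specific to the commutative structure of $Q^+$ in characteristic $2$) is needed to complete your proof; as it stands, the case the paper most cares about is missing. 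A minor further point: your claim $\nrd_A(x)=(x^2)^2$ needs the observation that the reduced characteristic polynomial of an alternating element is invariant under $x\mapsto-x$, so its roots pair up as $\pm\sqrt{x^2}$; the paper sidesteps this by allowing $q_\sigma(x)=\lambda x^2$ with $\lambda^4=1$ and then using $p_\sigma^2=\id$ to force $\lambda=\pm1$.
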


\begin{proof}
Set $\alpha=x^2\in F$ and $\beta=q_\sigma(x)\in F$.
Then $\beta^2=q_\sigma(x)^2=\nrd_A(x)=\pm\alpha^2$.
Thus, $\beta=\lambda\alpha$ for some $\lambda\in F$ with $\lambda^4=1$, i.e., $q_\sigma(x)=\lambda x^2$.
If $\alpha\neq0$, then multiplying $xp_\sigma(x)=q_\sigma(x)=\lambda x^2$ on the left by $x^{-1}$ we obtain $p_\sigma(x)=\lambda x$.
The relation $p_\sigma^2=\id$ then implies that $\lambda=\pm1$ and we are done.
So suppose that $\alpha=0$, i.e., $x^2=0$.
By (\ref{iso}) we have $\mathfrak{b}_{q_\sigma}(p_\sigma(x),x)=p_\sigma(x)^2+x^2=p_\sigma(x)^2$, hence $p_\sigma(x)^2\in F$.
On the other hand, the relations $xp_\sigma(x)=q_\sigma(x)=\lambda x^2=0$ show that $p_\sigma(x)$ is not invertible.
Thus,
\begin{align}\label{eq11}
p_\sigma(x)^2=0.
\end{align}
Suppose that $p_\sigma(x)\neq x$, hence $x\notin\alt(A,\sigma)^+$.
In view of (\ref{rema}) one can find $w\in\alt(A,\sigma)^-$ such that $\mathfrak{b}_{q_\sigma}(x,w)=1$.
By (\ref{iso}) we have
\begin{align}\label{eq10}
-xw+wp_\sigma(x)=1.
\end{align}
Multiplying (\ref{eq10}) on the left by $x$ we get $xwp_\sigma(x)=x$.
Using (\ref{eq10}), it follows that $(wp_\sigma(x)-1)p_\sigma(x)=x$, which yields $p_\sigma(x)=-x$ by (\ref{eq11}).
This completes the proof
(note that if $\car F=2$, this argument shows that the assumption $p_\sigma(x)\neq x$ leads to the contradiction $p_\sigma(x)=-x$, hence $p_\sigma(x)=x$).
\end{proof}
The next result follows from (\ref{psigma}) and the relations (\ref{eq5}) and (\ref{eq7}) below (\ref{not}).
\begin{prop}\label{x2}
Let $(A,\sigma)$ be a decomposable biquaternion algebra with ortho\-gonal involution over a field $F$ and let $\alt(A,\sigma)^0=\alt(A,\sigma)^+\cup\alt(A,\sigma)^-$.
Then
$\alt(A,\sigma)^0=\{x\in\alt(A,\sigma)\mid p_\sigma(x)=\pm x\}=\{x\in\alt(A,\sigma)\mid x^2\in F\}$.
\end{prop}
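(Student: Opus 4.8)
The plan is to establish the two claimed equalities separately; both reduce immediately to facts already recorded above, so there is little to do.

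For the equality $\alt(A,\sigma)^0=\{x\in\alt(A,\sigma)\mid p_\sigma(x)=\pm x\}$, I would simply quote the characterizations (\ref{eq5}) and (\ref{eq7}), which identify $\alt(A,\sigma)^+$ and $\alt(A,\sigma)^-$ with the sets of $x\in\alt(A,\sigma)$ satisfying $p_\sigma(x)=x$ and $p_\sigma(x)=-x$, respectively. Taking the union gives exactly $\{x\in\alt(A,\sigma)\mid p_\sigma(x)\in\{x,-x\}\}$, with no computation required; this also works verbatim when $\car F=2$, where the two subspaces coincide.

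For the equality with $\{x\in\alt(A,\sigma)\mid x^2\in F\}$ I would argue by the two inclusions. If $p_\sigma(x)=\pm x$, then multiplying by $x$ and using $xp_\sigma(x)=q_\sigma(x)$ from (\ref{not}) yields $x^2=\pm q_\sigma(x)$, which lies in $F$ since $q_\sigma:\alt(A,\sigma)\rightarrow F$; hence $p_\sigma(x)=\pm x$ forces $x^2\in F$. The reverse inclusion --- if $x^2\in F$ then $p_\sigma(x)=\pm x$ --- is precisely the content of (\ref{psigma}). Combining the two equalities completes the proof.

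There is no genuine obstacle here: the proposition is essentially a repackaging of (\ref{psigma}) together with the eigenspace descriptions of $\alt(A,\sigma)^{\pm}$. The only point deserving a moment's attention is checking that $x^2=\pm q_\sigma(x)$ really lands in $F$ in the forward direction of the second equality, which is immediate since $q_\sigma$ is $F$-valued.
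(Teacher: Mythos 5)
Your proof is correct and follows exactly the route the paper takes: the paper derives this proposition directly from (\ref{psigma}) together with the eigenspace descriptions (\ref{eq5}) and (\ref{eq7}), and your filling in of the forward inclusion via $x^2=\pm q_\sigma(x)\in F$ is the only detail the paper leaves implicit.
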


\begin{notation}
For a decomposable biquaternion algebra with involution of orthogonal type $(A,\sigma)$ over a field $F$, we use the notation
$Q(A,\sigma)^+=F+\alt(A,\sigma)^+$ and $Q(A,\sigma)^-=F+\alt(A,\sigma)^-$.
We will simply denote $Q(A,\sigma)^+$ by $Q^+$ and $Q(A,\sigma)^-$ by $Q^-$,
if the pair $(A,\sigma)$ is clear from the context.
\end{notation}

\begin{lem}{\rm (\cite{parimala})}\label{q}
Let $(A,\sigma)$ be a decomposable biquaternion algebra with ortho\-gonal involution over a field $F$.
\begin{itemize}
\item[$(1)$] If $\car F\neq2$, then $Q^+$ and $Q^-$ are two $\sigma$-invariant quaternion subalgebras of $A$ with
$Q^+_0=\alt(A,\sigma)^+$ and $Q^-_0=\alt(A,\sigma)^-$.
Furthermore, we have $(A,\sigma)\simeq(Q^+,\sigma|_{Q^+})\otimes(Q^-,\sigma|_{Q^-})$, where
 $\sigma|_{Q^+}$ and $\sigma|_{Q^-}$ are the canonical involutions of $Q^+$ and $Q^-$ respectively.
\item[$(2)$] If $\car F=2$, then $Q^+=Q^-$ is a maximal commutative subalgebra of $F$ satisfying $x^2\in F$ for every $x\in Q^+$.
\end{itemize}
\end{lem}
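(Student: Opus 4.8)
\emph{Plan of proof.} The plan is to read everything off the map $p_\sigma$. Writing $\mathfrak b=\mathfrak b_{q_\sigma}$, two facts from the preceding lemmas carry the argument. By (\ref{eq5})--(\ref{eq7}), $p_\sigma=\id$ on $\alt(A,\sigma)^+$ and $p_\sigma=-\id$ on $\alt(A,\sigma)^-$, so from $q_\sigma(x)=xp_\sigma(x)$ we get $x^2=\pm q_\sigma(x)\in F$ for every $x\in\alt(A,\sigma)^+\cup\alt(A,\sigma)^-$ (this is (\ref{x2})); and by (\ref{iso}), $\mathfrak b(x,y)=xp_\sigma(y)+yp_\sigma(x)$, so $xy+yx=\pm\mathfrak b(x,y)\in F$ whenever $x,y$ lie in the same one of $\alt(A,\sigma)^\pm$. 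I would then split into the two characteristics.

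\emph{The case $\car F\neq2$.} Here $\alt(A,\sigma)^+\cap\alt(A,\sigma)^-=0$, so by dimension and (\ref{rema}) the decomposition $\alt(A,\sigma)=\alt(A,\sigma)^+\perp\alt(A,\sigma)^-$ is $q_\sigma$-orthogonal and $q_\sigma$ is regular on $\alt(A,\sigma)^+$. I would pick a $q_\sigma$-orthogonal basis $e_1,e_2,e_3$ of $\alt(A,\sigma)^+$ with $\alpha_i:=q_\sigma(e_i)=e_i^2\in F^\times$, so $e_ie_j=-e_je_i$ for $i\neq j$. Each $e_ie_j$ ($i\neq j$) is then skew, hence lies in $\alt(A,\sigma)$, and has square $-\alpha_i\alpha_j\in F$, so by (\ref{x2}) it lies in $\alt(A,\sigma)^+\cup\alt(A,\sigma)^-$; it cannot lie in $\alt(A,\sigma)^-$, since by (\ref{rema}) it would then commute with $e_i$, whereas $e_i(e_ie_j)=\alpha_ie_j=-(e_ie_j)e_i$. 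Hence $e_ie_j\in\alt(A,\sigma)^+$, and $Q^+:=F\oplus\alt(A,\sigma)^+$ is exactly the quaternion algebra $(\alpha_1,\alpha_2)_F$ generated by $e_1,e_2$, with $Q^+_0=\alt(A,\sigma)^+$; it is $\sigma$-invariant because $\alt(A,\sigma)^+\subseteq\Skew(A,\sigma)$, and $\sigma|_{Q^+}$, being the identity on $F$ and $-\id$ on $Q^+_0$, is the canonical involution. The same applies to $Q^-$. Finally (\ref{rema}) gives $Q^+\subseteq C_A(Q^-)$, and since $Q^-$ is central simple one has $\dim_FC_A(Q^-)=4=\dim_FQ^+$, whence $Q^+=C_A(Q^-)$ and $A=Q^-\otimes_FQ^+$ by the double centralizer theorem; the anti-automorphisms $\sigma$ and $\sigma|_{Q^-}\otimes\sigma|_{Q^+}$ agree on the generating set $Q^-\cup Q^+$, hence are equal. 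This gives $(1)$.

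\emph{The case $\car F=2$.} Now $\alt(A,\sigma)^+=\alt(A,\sigma)^-$, so (\ref{rema}) becomes $\alt(A,\sigma)^+\subseteq C_A(\alt(A,\sigma)^+)$, i.e.\ $\alt(A,\sigma)^+$ is commutative, and (as above) each of its elements is square-central. Since $\sigma$ is orthogonal, $1\notin\alt(A,\sigma)$, so $\alt(A,\sigma)\cap F=0$ and $Q^+:=F\oplus\alt(A,\sigma)^+$ is a $4$-dimensional commutative subalgebra of $A$ with $x^2\in F$ for all $x$. What remains is that $Q^+$ is maximal commutative, equivalently $C_A(Q^+)=Q^+$. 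My approach here would be: if $q_\sigma$ does not vanish on $\alt(A,\sigma)^+$, take an invertible $e\in\alt(A,\sigma)^+$; conjugation by $e$ is an automorphism of $A$ of order dividing $2$, and computing its fixed algebra and then intersecting with that of a second invertible element of the commutative subspace $\alt(A,\sigma)^+$ should yield $\dim_FC_A(Q^+)=4$. The remaining case, where $\alt(A,\sigma)^+$ is a totally singular $3$-dimensional subspace of the Albert form $q_\sigma$ (so that $q_\sigma$ is hyperbolic and $A$ is split), I would dispatch by an explicit computation. This maximality statement is also the content of the reference \cite{parimala} for the lemma.

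\emph{Main obstacle.} The hard part is precisely the maximality in $(2)$. Dimension alone will not suffice: in characteristic $2$ a commutative subalgebra of $A$ of dimension $\deg A=4$ all of whose elements are square-central need not be maximal commutative --- for instance, writing $E_{ij}$ for the matrix units, $F\cdot1\oplus(FE_{13}+FE_{14}+FE_{23})\subseteq M_4(F)$ is properly contained in the commutative subalgebra $F\cdot1\oplus(FE_{13}+FE_{14}+FE_{23}+FE_{24})$. So one genuinely has to use the finer structure of $\alt(A,\sigma)^+$ coming from $p_\sigma$ (the presence of an invertible element and the resulting centralizer count) rather than a bare count of dimensions; everything in $(1)$, by contrast, is a direct unwinding of (\ref{iso})--(\ref{x2}).
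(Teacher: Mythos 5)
Your treatment of part $(1)$ is correct and in fact more self-contained than the paper's: the paper simply quotes \cite[(3.5)]{parimala} for the fact that $Q^+$ is a $\sigma$-invariant quaternion subalgebra on which $\sigma$ restricts to the canonical involution, and \cite[(2.23)]{knus} for $\sigma|_{Q^-}$, whereas you derive $e_ie_j\in\alt(A,\sigma)^+$ directly from (\ref{x2}) and (\ref{rema}) and then finish with the double centralizer theorem. That part of the argument is sound.

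Part $(2)$, however, is only a plan, and the two steps you leave open are precisely the substance of the statement. First, you assert that $Q^+=F\oplus\alt(A,\sigma)^+$ is a commutative \emph{subalgebra}, but closure under multiplication is never addressed: for commuting $x,y\in\alt(A,\sigma)^+$ with $x^2,y^2\in F$, the product $xy$ is $\sigma$-symmetric and square-central, but in characteristic $2$ the symmetric elements of $A$ form a $10$-dimensional space while $F+\alt(A,\sigma)$ is only $7$-dimensional, so $xy\in F+\alt(A,\sigma)^+$ genuinely needs an argument (one can extract it from a decomposition $(A,\sigma)\simeq(Q_1,\sigma_1)\otimes(Q_2,\sigma_2)$ as in the proof of (\ref{bas} (4)), which produces units $u,v\in\alt(A,\sigma)^+$ with $uv\in\alt(A,\sigma)^+$ and hence $Q^+=F[u,v]$; but that has to be said). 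Second, the maximality: your centralizer strategy works cleanly only when $F[u,v]$ is a field, i.e.\ when $\lla u^2,v^2\rra$ is anisotropic, since otherwise $F[u,v]$ is not semisimple and the double centralizer theorem gives no dimension count. The degenerate cases are explicitly deferred to ``an explicit computation'' that is not carried out, and they are not trivial: for instance if $u^2\in F^{\times2}$ then $u+\lambda$ is a nonzero square-zero element, and centralizers of square-zero matrices in $M_4(F)$ have dimension $8$ or $10$ depending on rank, so the intersection of two such centralizers must be controlled by hand. These omissions are exactly what the paper outsources to \cite[(3.6)]{parimala} (equivalently to (\ref{mah}) together with \cite[(2.2.3)]{jacob} via (\ref{setalt})). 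Your observation that a bare dimension count cannot give maximality is correct and nicely illustrated, but it also shows that what remains unproved is the actual content of part $(2)$.
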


\begin{proof}
Assume first that $\car F\neq2$.
As observed in \cite[(3.5)]{parimala}, $Q^+$ is a $\sigma$-invariant quaternion subalgebra of $A$ and $\sigma|_{Q^+}$ is of symplectic type.
By dimension count and (\ref{rema}) we obtain $Q^-= C_A(Q^+)$, hence $A\simeq Q^+\otimes_F Q^-$.
By \cite[(2.23 (1))]{knus}, $\sigma|_{Q^-}$ is of symplectic type.
Finally, since $\trd_{Q^+}(x)=0$ for every $x\in\alt(A,\sigma)^+$, we have $Q^+_0=\alt(A,\sigma)^+$.
Similarly $Q^-_0=\alt(A,\sigma)^-$.
This proves the first part.
The second part follows from \cite[(3.6)]{parimala}.
\end{proof}

\begin{notation}
Let $(A,\sigma)$ be a decomposable biquaternion algebra with ortho\-gonal involution over a field $F$.
We denote by $q^+_\sigma$ and $q^-_\sigma$ the restrictions of $q_\sigma$ to $\alt(A,\sigma)^+$ and $\alt(A,\sigma)^-$ respectively.
\end{notation}

\begin{lem}\label{bas}
Let $(A,\sigma)$ be a decomposable biquaternion algebra with orthogonal involution over a field $F$.
\begin{itemize}
\item[$(1)$] Every unit $u\in\alt(A,\sigma)^+$ {\rm(}resp. $u\in\alt(A,\sigma)^-${\rm)} can be extended to a basis $(u,v,w)$ of $\alt(A,\sigma)^+$ {\rm(}resp. $\alt(A,\sigma)^-${\rm)} such that $w=uv$.
\item[$(2)$] Every basis $(u,v,w)$ of $\alt(A,\sigma)^+$ {\rm(}resp. $\alt(A,\sigma)^-${\rm)} with $w=uv$ is ortho\-gonal with respect to the polar form of $q_\sigma^+$ {\rm(}resp. $q_\sigma^-${\rm)}.
\item[$(3)$] If $\car F\neq2$, then $N_{Q^+}\simeq\langle1\rangle_q\perp(-1)\cdot q_\sigma^+$ and $N_{Q^-}\simeq\langle1\rangle_q\perp q_\sigma^-$.
\item[$(4)$] If $\car F=2$ and $(A,\sigma)\simeq(Q_1,\sigma_1)\otimes(Q_2,\sigma_2)$ is a decomposition of $(A,\sigma)$, then $q_{\sigma}^+\simeq\langle\alpha,\beta,\alpha\beta\rangle_q$, where $\alpha\in F^\times$ and $\beta\in F^\times$ are representatives of the classes
$\disc\sigma_1\in F^\times/F^{\times2}$ and
$\disc\sigma_2\in F^\times/F^{\times2}$ respectively.
\end{itemize}

\end{lem}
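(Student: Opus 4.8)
The plan is to deduce parts (2) and (3) as short formal consequences of the identities $xp_\sigma(x)=q_\sigma(x)$, $p_\sigma|_{\alt(A,\sigma)^{\pm}}=\pm\id$ and Lemma~\ref{iso}; to prove part (1) for $\car F\neq 2$ from the quaternion structure of $Q^{+},Q^{-}$ furnished by Lemma~\ref{q}(1); to prove part (4) by exhibiting an explicit basis coming from the given decomposition; and finally to treat part (1) for $\car F=2$, which is the delicate point. For (3): since $p_\sigma$ acts as $\pm\id$ on $\alt(A,\sigma)^{\pm}$, for $x_0\in\alt(A,\sigma)^{+}=Q^{+}_{0}$ we get $q_\sigma^{+}(x_0)=x_0p_\sigma(x_0)=x_0^{2}$, and for $x_0\in\alt(A,\sigma)^{-}=Q^{-}_{0}$ we get $q_\sigma^{-}(x_0)=-x_0^{2}$; on the other hand, for a quaternion algebra $Q$ with its canonical involution (which is $\sigma|_{Q^{\pm}}$ by Lemma~\ref{q}(1)) one has $N_{Q}(a+x_0)=(a-x_0)(a+x_0)=a^{2}-x_0^{2}$ for $a\in F$, $x_0\in Q_{0}$, hence $N_{Q}\simeq\langle1\rangle_{q}\perp(N_{Q}|_{Q_{0}})$; plugging in $N_{Q^{+}}|_{Q^{+}_{0}}(x_0)=-x_0^{2}=-q_\sigma^{+}(x_0)$ and $N_{Q^{-}}|_{Q^{-}_{0}}(x_0)=-x_0^{2}=q_\sigma^{-}(x_0)$ yields the two asserted isometries. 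For (2): by Lemma~\ref{iso}, $\mathfrak b_{q_\sigma}(x,y)=xp_\sigma(y)+yp_\sigma(x)=\pm(xy+yx)$ for $x,y$ lying in one of $\alt(A,\sigma)^{\pm}$; if $(u,v,w)$ is such a basis with $w=uv$, then $vu=\varepsilon uv$ with $\varepsilon=1$ in characteristic $2$ (as $Q^{+}$ is commutative) and $\varepsilon=-1$ in characteristic $\neq 2$ (as $w=uv\in Q^{+}_{0}$ is traceless, so $uv+vu=\trd_{Q^{+}}(uv)=0$), whence $\mathfrak b_{q_\sigma}(u,v)=\pm(uv+vu)=0$, $\mathfrak b_{q_\sigma}(u,w)=\pm(1+\varepsilon)u^{2}v=0$ and $\mathfrak b_{q_\sigma}(v,w)=\pm(1+\varepsilon)uv^{2}=0$, i.e. the basis is orthogonal for the polar form of $q_\sigma^{+}$ (and similarly for $q_\sigma^{-}$).

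For (1) with $\car F\neq 2$: by Lemma~\ref{q}(1), $Q^{+}$ is a quaternion algebra with $Q^{+}_{0}=\alt(A,\sigma)^{+}$; given an invertible $u\in\alt(A,\sigma)^{+}$, conjugation by $u$ is an $F$-linear involution of $Q^{+}_{0}$ with $(+1)$-eigenspace $Fu$ and $2$-dimensional $(-1)$-eigenspace, so any nonzero $v$ in the latter satisfies $uv=-vu$, hence $uv\in Q^{+}_{0}$, and $(u,v,uv)$ is a quaternion basis of $Q^{+}_{0}=\alt(A,\sigma)^{+}$; take $w=uv$ (the case of $\alt(A,\sigma)^{-}$ is identical). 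For (4): fix a decomposition $(A,\sigma)\simeq(Q_{1},\sigma_{1})\otimes(Q_{2},\sigma_{2})$ and choose generators $e_{1},e_{2}$ of the $1$-dimensional spaces $\alt(Q_{1},\sigma_{1}),\alt(Q_{2},\sigma_{2})$ normalised so that $e_{1}^{2}=\alpha$ and $e_{2}^{2}=\beta$ — possible because $e_{i}=a_{i}-\sigma_{i}(a_{i})$ is traceless, so $e_{i}^{2}=\nrd_{Q_{i}}(e_{i})\in F^{\times}$ represents $\disc\sigma_{i}$. Then $e_{1}\otimes 1$ and $1\otimes e_{2}$ are visibly alternating in $A$, and $e_{1}\otimes e_{2}$ is alternating because, writing $e_{i}=a_{i}+\sigma_{i}(a_{i})$, $e_{1}\otimes e_{2}=\bigl(a_{1}\otimes a_{2}+\sigma(a_{1}\otimes a_{2})\bigr)+\bigl(a_{1}\otimes\sigma_{2}(a_{2})+\sigma(a_{1}\otimes\sigma_{2}(a_{2}))\bigr)$ is a sum of two alternating elements. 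Each of these three elements squares into $F$ (to $\alpha,\beta,\alpha\beta$), so by Proposition~\ref{x2} they lie in $\alt(A,\sigma)^{0}=\alt(A,\sigma)^{+}$; being linearly independent they form a basis of $\alt(A,\sigma)^{+}$, and since $q_\sigma^{+}(x)=xp_\sigma(x)=x^{2}$ while the polar form of $q_\sigma^{+}$ vanishes in characteristic $2$ (Lemma~\ref{iso} and commutativity of $Q^{+}$), this basis is orthogonal and $q_\sigma^{+}\simeq\langle\alpha,\beta,\alpha\beta\rangle_{q}$.

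It remains to prove (1) for $\car F=2$, and this is where the real work lies. Here $Q^{+}=F\oplus\alt(A,\sigma)^{+}$ is commutative (Lemma~\ref{q}(2)). Given a unit $u\in\alt(A,\sigma)^{+}$, put $\alpha_{0}:=u^{2}\in F^{\times}$ and let $W=\{v\in\alt(A,\sigma)^{+}:uv\in\alt(A,\sigma)^{+}\}$, the kernel of the $F$-linear map sending $v$ to the $F$-component of $uv$; this map is nonzero (it sends $u$ to $\alpha_{0}$), so $W$ is $2$-dimensional and $\alt(A,\sigma)^{+}=Fu\oplus W$. If $q_\sigma^{+}|_{W}\neq 0$, choose $v\in W$ with $v^{2}\neq 0$; then $v$ and $w:=uv$ are units in $\alt(A,\sigma)^{+}$, and multiplying a hypothetical relation $uv=\lambda u+\mu v$ first by $u$ and then by $v$ forces $\lambda=\mu=0$, so $(u,v,w)$ is the required basis. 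The obstacle is the case $q_\sigma^{+}|_{W}=0$: then $\alpha_{0}$ is a square, say $\alpha_{0}=\rho^{2}$, and $n:=u-\rho$ is a nonzero square-zero element of $Q^{+}$; one must then analyse the left multiplication $L_{u}$ by $u$ on $W$ (which satisfies $(L_{u}|_{W}-\rho)^{2}=0$) and rule out the degenerate configuration $L_{u}|_{W}=\rho\cdot\id$, equivalently $nW=0$, in order to find $v\in W$ with $(u,v,uv)$ linearly independent. I expect disposing of this degenerate configuration — where inside the commutative algebra $Q^{+}$ no anticommuting-type partner for $u$ is available — to be the technical heart of the lemma.
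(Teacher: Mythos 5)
Your handling of parts (2), (3) and (4) is correct and close to the paper's: for (2) the paper obtains $vu=\sigma(uv)=-uv$ from $uv=w\in\alt(A,\sigma)$, which is exactly your $\varepsilon$-computation; for (3) it forms the quaternion basis $(1,u,v,uv)$ and quotes the standard formula for the norm form, equivalent to your direct computation $N_Q(a+x_0)=a^2-x_0^2$; for (4) it uses the same three elements $e_1\otimes 1$, $1\otimes e_2$, $e_1\otimes e_2$ (you additionally verify that $e_1\otimes e_2$ is alternating, a point the paper passes over). The problem is part (1). In characteristic $2$ you reduce to the configuration $u^2=\rho^2\in F^{\times2}$, $n:=u+\rho$ with $n^2=0$ and $nW=0$, and you announce that this configuration must be ruled out without doing so. That is a genuine gap, and it cannot be closed, because the configuration actually occurs. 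Take $(A,\sigma)=(M_4(F),t)$ with $\car F=2$, so that $Q^+=F[x,y]$ with $x^2=y^2=1$ and $\alt(A,\sigma)^+=Fx+Fy+Fxy$ (take $x,y$ to be the images of $\left(\begin{smallmatrix}0&1\\1&0\end{smallmatrix}\right)$ in the two tensor factors), and let $u=x+y+xy$, a unit with $u^2=1$. Putting $s=x+1$, $t=y+1$ gives $Q^+\simeq F[s,t]/(s^2,t^2)$ and $u=1+st$, so $uz=z+stz=z$ for every $z$ in the ideal $(s,t)$, because $st\cdot(s,t)=0$. Since your $W$ equals $\alt(A,\sigma)^+\cap(s,t)$, every $v\in W$ satisfies $uv=v$, while every $v\in\alt(A,\sigma)^+\setminus W$ has $uv\notin\alt(A,\sigma)^+$. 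Hence for this $u$ no basis $(u,v,w)$ of $\alt(A,\sigma)^+$ with $w=uv$ exists: statement (1) is actually false as written in characteristic $2$, so no completion of your argument (or of the paper's) is possible without an extra hypothesis on $u$.

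You should also know that the paper's own proof of (1) is a one-liner that takes an arbitrary $u'\in\alt(A,\sigma)^+\setminus Fu$, replaces it by $v=u'-\lambda\alpha^{-1}u$, and declares $(u,v,uv)$ ``the desired basis'' without ever checking linear independence; the example above shows this check can fail. A related, smaller slip occurs in your characteristic $\neq2$ argument: it is not true that \emph{any} nonzero $v$ in the $(-1)$-eigenspace $E_-$ of $\inn(u)$ yields a quaternion basis, since when $u^2=\rho^2$ and $Q^+$ is split one can have $uv=\rho v$ for a square-zero $v$ (e.g.\ $u=\mathrm{diag}(1,-1)$, $v=E_{12}$ in $M_2(F)$). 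There the statement is nevertheless salvageable: if $uv\in Fv$ for all $v\in E_-$ then $(u-\rho)Q^+$ would be a one-dimensional right ideal of a quaternion algebra, which is impossible, so a suitable $v\in E_-$ always exists in characteristic $\neq2$. No such escape is available in characteristic $2$, where $Q^+$ is commutative and local; the downstream use of (1) in the metabolicity theorem survives only because there one merely needs \emph{some} isotropic vector of $q_\sigma^+$, not a basis adapted to the given $u$.
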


\begin{proof}
We just prove the result for $q_\sigma^+$.
The proof for $q_\sigma^-$ is similar.

(1) Choose an element $u'\in\alt(A,\sigma)^+\setminus Fu$ and set $\alpha=u^2\in F^\times$.
By (\ref{q}), $uu'\in Q^+=F+\alt(A,\sigma)^+$.
Thus, there exist $\lambda\in F$ and $w\in\alt(A,\sigma)^+$ such that $uu'=\lambda+w$.
Set $v=u'-\lambda\alpha^{-1}u\in\alt(A,\sigma)^+$.
Then $uv=w\in\alt(A,\sigma)^+$.
Thus, $(u,v,w)$ is the desired basis.

(2) Let $\mathcal{B}=(u,v,w)$ be a basis of $\alt(A,\sigma)^+$ with $w=uv$.
Then $vu=\sigma(uv)=-uv$.
Using (\ref{iso}) we obtain $\mathfrak{b}(u,v)=uv+vu=0$, where $\mathfrak{b}$ is the polar form of $q_\sigma^+$.
Similarly, $\mathfrak{b}(u,w)=\mathfrak{b}(v,w)=0$.

(3) Let $(u,v,w)$ be a basis of $\alt(A,\sigma)^+$ with $w=uv$.
By (2), $q_{\sigma}^+\simeq\langle\alpha,\beta,-\alpha\beta\rangle_q$, where $\alpha=u^2\in F$ and $\beta=v^2\in F$.
Since $vu=-uv$, $(1,u,v,w)$ is a quaternion basis of $Q^+$.
Thus, $N_{Q^+}\simeq\langle1,-\alpha,-\beta,\alpha\beta\rangle_q$ by \cite[(9.6)]{elman}.

(4) Let $u\in\alt(Q_1,\sigma_1)$ and $v\in\alt(Q_2,\sigma_2)$ be two units and set $\alpha=u^2\in F^\times$, $\beta=v^2\in F^\times$ and $w=uv$.
By (\ref{x2}) we have $u,v\in\alt(A,\sigma)^+$.
Also, $\disc\sigma_1=\alpha F^{\times2}\in F^\times/F^{\times2}$ and
$\disc\sigma_2=\beta F^{\times2}\in F^\times/F^{\times2}$.
Since $w\in\alt(A,\sigma)$ and $w^2\in F$, by (\ref{x2}) we obtain $w\in\alt(A,\sigma)^+$.
So $(u,v,w)$ is a basis of $\alt(A,\sigma)^+$ and $q_{\sigma}^+\simeq\langle\alpha,\beta,\alpha\beta\rangle_q$.
\end{proof}

\begin{prop}\label{jay}{\rm (Compare \cite[(5.3)]{knus5})}
Let $(A,\sigma)$ and $(A',\sigma')$ be decomposable biquaternion algebras with orthogonal involution over a field $F$.
If $(A,\sigma)\simeq(A',\sigma')$, then either $q_\sigma\simeq q_{\sigma'}$ and $q_\sigma^+\simeq q_{\sigma'}^+$ or $q_\sigma\simeq (-1)\cdot q_{\sigma'}$ and $q_\sigma^+\simeq q_{\sigma'}^-$.
\end{prop}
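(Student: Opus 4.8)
The plan is to fix an isomorphism of algebras with involution $\phi\colon(A,\sigma)\iso(A',\sigma')$ and pull the pfaffian structure of $(A',\sigma')$ back along $\phi$. Since $\phi$ is an $F$-algebra isomorphism with $\phi\circ\sigma=\sigma'\circ\phi$, it restricts to an $F$-linear bijection $\alt(A,\sigma)\to\alt(A',\sigma')$ and satisfies $\nrd_{A'}(\phi(x))=\nrd_A(x)$. The first step is to observe that $q_{\sigma'}\circ\phi$ is again a pfaffian of $(A,\sigma)$ in the sense of (\ref{not}): one has $\bigl(q_{\sigma'}(\phi(x))\bigr)^2=\nrd_{A'}(\phi(x))=\nrd_A(x)$, and the $F$-linear map $\pi:=\phi^{-1}\circ p_{\sigma'}\circ\phi$ on $\alt(A,\sigma)$ satisfies $x\,\pi(x)=\pi(x)\,x=q_{\sigma'}(\phi(x))$ and $\pi^2=\id$, because $\phi^{-1}$ is an algebra homomorphism fixing $F$. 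Hence, by the uniqueness of the pfaffian up to sign ((\ref{pfsigma}), (\ref{not})) and of its adjoint ((\ref{inve})), there is $\varepsilon\in\{1,-1\}$ with
\[
q_{\sigma'}\circ\phi=\varepsilon\,q_\sigma\qquad\text{and}\qquad\phi\circ p_\sigma=\varepsilon\,(p_{\sigma'}\circ\phi).
\]

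Next I would split into the two cases according to $\varepsilon$. If $\varepsilon=1$, then $\phi$ is an isometry $(\alt(A,\sigma),q_\sigma)\iso(\alt(A',\sigma'),q_{\sigma'})$, so $q_\sigma\simeq q_{\sigma'}$; moreover $\phi\circ p_\sigma=p_{\sigma'}\circ\phi$ and the characterization $\alt(A,\sigma)^+=\{x\mid p_\sigma(x)=x\}$ from (\ref{eq5}) force $\phi(\alt(A,\sigma)^+)=\alt(A',\sigma')^+$, so the restriction of $\phi$ gives $q_\sigma^+\simeq q_{\sigma'}^+$. If $\varepsilon=-1$, then $\phi$ is an isometry onto $(\alt(A',\sigma'),(-1)\cdot q_{\sigma'})$, so $q_\sigma\simeq(-1)\cdot q_{\sigma'}$; and since now $\phi\circ p_\sigma=-(p_{\sigma'}\circ\phi)$, the same argument using (\ref{eq5}) and (\ref{eq7}) yields $\phi(\alt(A,\sigma)^+)=\alt(A',\sigma')^-$, so that the restriction of $\phi$ relates $q_\sigma^+$ to $q_{\sigma'}^-$; reading off the precise isometry here uses the identities $q_\sigma^+(y)=y^2$ on $\alt(A,\sigma)^+$ and $q_{\sigma'}^-(z)=-z^2$ on $\alt(A',\sigma')^-$ (equivalently, the shapes of $q_\sigma^\pm$ obtained in the proof of (\ref{bas})(3)). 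In characteristic $2$, where $\alt^+=\alt^-$ and $-1=1$, only the case $\varepsilon=1$ arises and the statement collapses to $q_\sigma\simeq q_{\sigma'}$, $q_\sigma^+\simeq q_{\sigma'}^+$.

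The main obstacle, I expect, is the sign bookkeeping in the case $\varepsilon=-1$: one has to check that conjugating $p_{\sigma'}$ by $\phi$ produces $-p_\sigma$ (not $p_\sigma$) — this is forced by $x\,\pi(x)=-q_\sigma(x)$ together with the relation $\pi(x)=x^{-1}q_{\sigma'}(\phi(x))$ implicit in (\ref{inve}) — and that this is precisely the sign swapping the $+1$- and $-1$-eigenspaces of the pfaffian adjoints, and then to track the resulting scalar correctly through the restriction to $\alt(A,\sigma)^+$. A reassuring cross-check, available in characteristic $\neq2$, is that $Q(A,\sigma)^+$ and $Q(A,\sigma)^-$ are intrinsically attached to $(A,\sigma)$ by (\ref{q})(1), so $\phi$ necessarily sends the unordered pair $\{Q(A,\sigma)^+,Q(A,\sigma)^-\}$ to $\{Q(A',\sigma')^+,Q(A',\sigma')^-\}$, and (\ref{bas})(3) then translates the resulting isomorphisms of quaternion algebras directly into isometries relating the three-dimensional forms $q_\sigma^\pm$ and $q_{\sigma'}^\pm$.
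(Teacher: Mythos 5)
Your overall strategy coincides with the paper's: fix an isomorphism $\phi$, use $q_{\sigma'}(\phi(x))^2=\nrd_A(x)=q_\sigma(x)^2$ together with the uniqueness of the pfaffian up to sign to get $q_{\sigma'}\circ\phi=\varepsilon\, q_\sigma$ with $\varepsilon=\pm1$, transport the pfaffian adjoint along $\phi$ to obtain $\phi\circ p_\sigma=\varepsilon\,(p_{\sigma'}\circ\phi)$, and then use the eigenspace descriptions (\ref{eq5})--(\ref{eq7}) to see where $\alt(A,\sigma)^+$ is sent. (The paper verifies $f\circ p_\sigma\circ f^{-1}=p_{\sigma'}$ by checking the defining identities of the adjoint directly rather than invoking the uniqueness statement (\ref{inve}); that is only a cosmetic difference.) Your case $\varepsilon=1$ is complete and agrees with the paper.

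The gap is exactly where you flagged it: in the case $\varepsilon=-1$ you never actually ``read off the precise isometry,'' and when one does, it does not come out as stated. For $y\in\alt(A,\sigma)^+$ one has $q_\sigma^+(y)=y\,p_\sigma(y)=y^2$, while for $z=\phi(y)\in\alt(A',\sigma')^-$ one has $q_{\sigma'}^-(z)=z\,p_{\sigma'}(z)=-z^2$; since $\phi$ preserves squares, this gives $q_{\sigma'}^-(\phi(y))=-y^2=-q_\sigma^+(y)$, so the restriction of $\phi$ is an isometry $q_\sigma^+\simeq(-1)\cdot q_{\sigma'}^-$, not $q_\sigma^+\simeq q_{\sigma'}^-$. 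The extra sign cannot be argued away: by (\ref{bas}), in characteristic $\neq2$ the form $q_\sigma^+$ is of the shape $\langle\alpha,\beta,-\alpha\beta\rangle_q$ (determinant $-1$ modulo squares) while $q_{\sigma'}^-$ is of the shape $\langle-\alpha',-\beta',\alpha'\beta'\rangle_q$ (determinant $+1$), so $q_\sigma^+\simeq q_{\sigma'}^-$ forces $-1\in F^{\times2}$, in which case the two alternatives of the proposition collapse into one another anyway. Hence either you must supply a further argument producing the literally stated isometry --- which the determinant computation rules out in general --- or the second alternative has to be read with the scalar $-1$ attached, i.e.\ as $q_\sigma^+\simeq(-1)\cdot q_{\sigma'}^-$. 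You should be aware that the paper's own proof has the same feature (its second case is dispatched with ``a similar argument shows''), so this is a defect you inherited rather than introduced; but as a proof of the statement as printed, your case $\varepsilon=-1$ is not closed.
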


\begin{proof}
Let $\phi:(A,\sigma)\iso(A',\sigma')$ be an isomorphism of $F$-algebras with involution.
Then $\phi(\alt(A,\sigma))=\alt(A',\sigma')$ and
\[q_{\sigma'}(\phi(x))^2={\nrd}_{A'}(\phi(x))={\nrd}_{A}(x)=q_\sigma(x)^2,\quad {\rm for}\ x\in\alt(A,\sigma).\]
Thus, $q_\sigma'\circ\phi=\pm q_{\sigma}$.
Suppose first that $q_\sigma'\circ\phi=q_{\sigma}$.
Then $\phi$ restricts to an isometry $f:(\alt(A,\sigma),q_\sigma)\rightarrow(\alt(A',\sigma'),q_{\sigma'})$.
Set $h=f\circ p_\sigma\circ f^{-1}$.
Then $h$ is an endomorphism of $\alt(A',\sigma')$.
We claim that $h=p_{\sigma'}$.
For every $x\in\alt(A',\sigma')$ we have $h^2(x)=f\circ p_\sigma^2\circ f^{-1}(x)=f\circ\id\circ f^{-1}(x)=x$ and
\begin{align*}
xh(x)&=xf(p_\sigma( f^{-1}(x)))=x\phi(p_\sigma( f^{-1}(x)))=\phi( f^{-1}(x))\phi(p_\sigma( f^{-1}(x)))\\
&=\phi(f^{-1}(x)p_\sigma( f^{-1}(x)))=\phi(q_\sigma( f^{-1}(x)))=\phi(q_{\sigma'}(x))=q_{\sigma'}(x).
\end{align*}
Similarly, we have $h(x)x=q_{\sigma'}(x)$ for every $x\in\alt(A',\sigma')$.
Thus, $h=p_{\sigma'}$ and the claim is proved.
It follows that $p_{\sigma'}\circ f= f\circ p_\sigma$.
Now, if $x\in\alt(A,\sigma)^+$, then $p_\sigma(x)=x$,
 which yields $p_{\sigma'}( f(x))=f(p_\sigma(x))= f(x)$.
It follows that $ f(x)\in\alt(A',\sigma')^+$, i.e., $f$ restricts to an isometry $q_\sigma^+\simeq q_{\sigma'}^+$.
A similar argument shows that if $q_\sigma'\circ\phi=-q_{\sigma}$, then $q_\sigma^+\simeq q_{\sigma'}^-$.
\end{proof}

The next result complements \cite[(5.3)]{knus5} for biquaternion algebras.
\begin{thm}\label{charn}
Let $(A,\sigma)$ and $(A',\sigma')$ be two decomposable biquaternion algebras with orthogonal involution over a field $F$ of characteristic different from $2$.
Let $Q^+=Q(A,\sigma)^+$, $Q^-=Q(A,\sigma)^-$, ${Q'}^+=Q(A',\sigma')^+$ and ${Q'}^-=Q(A',\sigma')^-$.
The following statements are equivalent.
\begin{itemize}
\item[$(1)$] $(A,\sigma)\simeq(A',\sigma')$.
\item[$(2)$] Either $q_\sigma\simeq q_{\sigma'}$ and $q_\sigma^+\simeq q_{\sigma'}^+$ or $q_\sigma\simeq (-1)\cdot q_{\sigma'}$ and $q_\sigma^+\simeq q_{\sigma'}^-$.
\item[$(3)$] $A\simeq A'$ and either $q_\sigma^+\simeq q_{\sigma'}^+$ or $q_\sigma^+\simeq q_{\sigma'}^-$.
\item[$(4)$] $A\simeq A'$ and either $Q^+\simeq {Q'}^+$ or $Q^+\simeq Q^-$.
\end{itemize}
\end{thm}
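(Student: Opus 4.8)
The plan is to establish the cyclic chain $(1)\Rightarrow(2)\Rightarrow(3)\Rightarrow(4)\Rightarrow(1)$. The first implication $(1)\Rightarrow(2)$ is exactly (\ref{jay}), so nothing is needed there. For $(2)\Rightarrow(3)$ I would use that $q_\sigma$ and $q_{\sigma'}$ are Albert forms of $A$ and $A'$ respectively (by \cite[(3.3)]{knus5}), and that an Albert form is defined only up to similarity, so that $(-1)\cdot q_{\sigma'}$ is again an Albert form of $A'$. Hence in either alternative of $(2)$ the forms $q_\sigma$ and $q_{\sigma'}$ are similar, and \cite[(16.3)]{knus} yields $A\simeq A'$. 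The condition on $q_\sigma^+$ carries over verbatim from $(2)$, so $(3)$ holds.

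For $(3)\Rightarrow(4)$ the key input is (\ref{bas})(3): $N_{Q^+}\simeq\langle1\rangle_q\perp(-1)\cdot q_\sigma^+$ and $N_{Q^-}\simeq\langle1\rangle_q\perp q_\sigma^-$, together with the analogous identities for $(A',\sigma')$. Since a quaternion algebra is determined up to isomorphism by its norm form, the isometry $q_\sigma^+\simeq q_{\sigma'}^+$ forces $N_{Q^+}\simeq N_{{Q'}^+}$, hence $Q^+\simeq {Q'}^+$; and, once the signs in (\ref{bas})(3) are taken into account, the alternative $q_\sigma^+\simeq q_{\sigma'}^-$ similarly forces $Q^+\simeq {Q'}^-$. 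Combined with $A\simeq A'$, this is $(4)$.

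For $(4)\Rightarrow(1)$ I would invoke (\ref{q})(1) to write $(A,\sigma)\simeq(Q^+,\sigma|_{Q^+})\otimes(Q^-,\sigma|_{Q^-})$ and $(A',\sigma')\simeq({Q'}^+,\sigma'|_{{Q'}^+})\otimes({Q'}^-,\sigma'|_{{Q'}^-})$, where all four restricted involutions are canonical. Passing to the Brauer group, $[Q^+]+[Q^-]=[A]=[A']=[{Q'}^+]+[{Q'}^-]$. If $Q^+\simeq {Q'}^+$, cancellation gives $[Q^-]=[{Q'}^-]$, hence $Q^-\simeq {Q'}^-$, since Brauer-equivalent quaternion algebras are isomorphic; as an algebra isomorphism of quaternion algebras automatically intertwines the canonical involutions, we get $(Q^\pm,\sigma|_{Q^\pm})\simeq({Q'}^\pm,\sigma'|_{{Q'}^\pm})$, and tensoring yields $(A,\sigma)\simeq(A',\sigma')$. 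If instead $Q^+\simeq {Q'}^-$, then likewise $Q^-\simeq {Q'}^+$, and the same argument applies after interchanging the two tensor factors of $(A',\sigma')$, which is harmless because the tensor product of algebras with involution is commutative up to isomorphism. This closes the cycle.

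The step I expect to be most delicate is $(3)\Rightarrow(4)$: the identifications in (\ref{bas})(3) are asymmetric — the pure part of $N_{Q^+}$ is $-q_\sigma^+$, whereas that of $N_{Q^-}$ is $+q_\sigma^-$ — so one must track the signs carefully to be certain that the two alternatives of $(3)$ line up correctly with the two alternatives of $(4)$. Conceptually, the appearance of two alternatives everywhere is unavoidable: only the unordered pair $\{\alt(A,\sigma)^+,\alt(A,\sigma)^-\}$, and hence only $\{Q^+,Q^-\}$, is intrinsic to $(A,\sigma)$, so any such classification must be insensitive to interchanging the roles of $+$ and $-$. Beyond this, everything reduces to (\ref{jay}), (\ref{q}) and (\ref{bas}).
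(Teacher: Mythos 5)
Your argument is correct and follows essentially the same route as the paper: $(1)\Rightarrow(2)$ is (\ref{jay}), $(2)\Rightarrow(3)$ is the Albert-form argument via \cite[(16.3)]{knus}, $(3)\Rightarrow(4)$ rests on (\ref{bas} (3)) together with the fact that a quaternion algebra is determined by its norm form, and $(4)\Rightarrow(1)$ reassembles the involution from the decomposition in (\ref{q} (1)). Three remarks. First, the sign issue you flag in $(3)\Rightarrow(4)$ but leave unresolved does settle itself: from the proof of (\ref{bas}) one has $q_\sigma^+\simeq\langle\alpha,\beta,-\alpha\beta\rangle_q$ (determinant $-1$ mod squares) while $q_{\sigma'}^-\simeq\langle-\alpha',-\beta',\alpha'\beta'\rangle_q$ (determinant $+1$ mod squares), so the alternative $q_\sigma^+\simeq q_{\sigma'}^-$ can only occur when $-1\in F^{\times2}$; in that case $(-1)\cdot q\simeq q$ for every diagonal quadratic form, the pure parts of $N_{Q^+}$ and $N_{{Q'}^-}$ coincide after all, and $Q^+\simeq{Q'}^-$ follows. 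The paper delegates exactly this point to the cited result in Lam, so you are no terser than the source, but this is the one step where your write-up asserts rather than proves. Second, in $(4)\Rightarrow(1)$ you recover $Q^-\simeq{Q'}^-$ by cancellation in the Brauer group of the classes of degree-$2$ algebras, whereas the paper identifies $Q^-=C_A(Q^+)$ and ${Q'}^-=C_{A'}({Q'}^+)$ and transports the isomorphism $A\simeq A'$; both are valid and of comparable effort. Third, you have (correctly) read the second alternative of item $(4)$ as $Q^+\simeq{Q'}^-$; the printed ``$Q^+\simeq Q^-$'' is evidently a typo, since the paper's own proof of $(4)\Rightarrow(1)$ treats the cases $Q^+\simeq{Q'}^+$ and $Q^+\simeq{Q'}^-$.
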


\begin{proof}
The implication $(1)\Rightarrow(2)$ follows from (\ref{jay}).
Since $q_\sigma$ and $q_{\sigma'}$ are Albert forms of $(A,\sigma)$ and $(A',\sigma')$ respectively,
the condition $q_\sigma\simeq q_{\sigma'}$ (resp. $q_\sigma\simeq (-1)\cdot q_{\sigma'}$) implies that $A\simeq A'$, proving
$(2)\Rightarrow(3)$.
The implication $(3)\Rightarrow(4)$ follows from
 (\ref{bas} (3)) and \cite[Ch. III, (2.5)]{lam}.
To prove $(4)\Rightarrow(1)$ assume first $Q^+\simeq {Q'}^+$.
By (\ref{q} (1)) we have $C_A(Q^+)=Q^-$ and $C_{A'}({Q'}^+)={Q'}^-$.
Thus, the isomorphisms $Q^+\simeq_F {Q'}^+$ and $A\simeq_FA'$ imply that $Q^-\simeq_F {Q'}^-$.
Since the restrictions of $\sigma$ to $Q^+$ and ${Q}^-$ and the restrictions of $\sigma'$ to ${Q'}^+$ and ${Q'}^-$ are all symplectic, we obtain
\begin{align*}
(A,\sigma)&\simeq_F(Q^+,\sigma|_{Q^+})\otimes_F(Q^-,\sigma|_{Q^-})\\
&\simeq_F({Q'}^+,\sigma'|_{{Q'}^+})\otimes_F({Q'}^-,\sigma'|_{{Q'}^-})\simeq_F(A',\sigma').
\end{align*}
A similar argument works if $Q^+\simeq {Q'}^-$.
\end{proof}

\section{Relation with the Pfister invariant in characteristic two}\label{sec-pfister}
Throughout this section, $F$ is a field of characteristic $2$.
\begin{defi}
Let $A$ be a finite-dimensional associative $F$-algebra.
The minimum number $r$ such that $A$ can be generated as an $F$-algebra
by $r$ elements is called the {\it minimum rank} of $A$ and is denoted by $r_F(A)$.
\end{defi}

\begin{thm}\label{mah}{\rm (\cite{mahmoudi2})}
Let $(A,\sigma)$ be a totally decomposable algebra with invo\-lution of orthogonal type over $F$.
There exists a symmetric and self-centralizing
subalgebra $S\subseteq A$ such that $x^2\in F$ for every $x\in S$ and
$\dim_F S\penalty 0 =2^n$, where $n=r_F(S)$.
Furthermore, for every subalgebra $S$ with these properties, we have $S=F+S_0$, where $S_0=S\cap\alt(A,\sigma)$.
In particular, $S\subseteq F+\alt(A,\sigma)$.
Finally, the subalgebra $S$ is uniquely determined up to isomorphism.
\end{thm}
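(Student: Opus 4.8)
The plan is to handle three things in turn: existence of such an $S$, the equality $S=F+S_0$ for an \emph{arbitrary} subalgebra $S$ with the listed properties, and uniqueness up to isomorphism. The first two I would treat constructively; the genuine difficulty will be in uniqueness.

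For \emph{existence}, start from a decomposition $(A,\sigma)\simeq\bigotimes_{i=1}^n(Q_i,\sigma_i)$ into $\sigma$-invariant quaternion subalgebras. A short argument in characteristic $2$ shows that every $\sigma_i$ must be orthogonal: if some $\sigma_j$ were symplectic, then taking a preimage of $1$ under $\id-\sigma_j$ on $\alt(Q_j,\sigma_j)=F$ and tensoring with $1$'s in the other slots would exhibit $1\in\alt(A,\sigma)$. Hence each $\alt(Q_i,\sigma_i)=Fv_i$ is one-dimensional with $v_i\notin F$, $\sigma_i(v_i)=v_i$ and $v_i^2\in F^\times$ (writing $\sigma_i=\inn(w_i)\circ\gamma$ with $w_i\in(Q_i)_0$ invertible yields $v_i=w_i^{-1}$). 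I would take $S=\bigotimes_i(F+Fv_i)$. Since the $v_i$ lie in different factors, $S$ is commutative, so $x^2\in F$ for all $x\in S$ (binomial expansion), $\dim_FS=2^n$, $\sigma|_S=\id$, $C_A(S)=\bigotimes_iC_{Q_i}(F+Fv_i)=S$, and $r_F(S)=n$. Finally, for $\emptyset\neq J$, writing $A=Q_i\otimes B$ with $i$ the least element of $J$, we have $v_i\in\alt(Q_i,\sigma_i)$ and the remaining product in $\Sym(B,\sigma|_B)$, so $v_J:=\prod_{k\in J}v_k\in\alt(Q_i,\sigma_i)\otimes\Sym(B,\sigma|_B)\subseteq\alt(A,\sigma)$. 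The span of the $2^n-1$ elements $v_J$ thus lies in $S\cap\alt(A,\sigma)$; since $1\notin\alt(A,\sigma)$, it equals $S_0$, and $S=F+S_0$.

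For an \emph{arbitrary} $S$ with the properties: self-centralizing gives $Z(S)=S\cap C_A(S)=S$, so $S$ is commutative, and with $x^2\in F$ and $\dim_FS=2^{r_F(S)}$ this forces $S\simeq F[t_1,\dots,t_n]/(t_i^2-a_i)$, $n=r_F(S)$ — an Artinian local Gorenstein $F$-algebra. Extend scalars to a splitting field $L$ of $A$ and write $A_L=\End_L(V)$ with $\sigma_L$ the adjoint of a non-degenerate non-alternating symmetric bilinear form $b$ on $V$. As $S_L$ is commutative with $\End_{S_L}(V)=C_{A_L}(S_L)=S_L$, the faithful $S_L$-module $V$ is free of rank one, so $V\simeq S_L$ (whence $\deg A=2^n$); identifying $V$ with $S_L$, the condition $\sigma_L|_{S_L}=\id$ means every left multiplication is $b$-self-adjoint, which forces $b(x,y)=\tau(xy)$ for the non-zero linear form $\tau:=b(1,-):S_L\to L$. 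Now $g\in\End_L(V)$ lies in $\alt(A_L,\sigma_L)$ iff the bilinear form $(x,y)\mapsto b(g(x),y)$ is alternating, i.e.\ $b(g(x),x)=0$ for all $x$; for $m\in S_L$ this reads $\tau(mx^2)=x^2\tau(m)=0$ for all $x$ (each $x^2$ lies in $L$), i.e.\ $\tau(m)=0$. Hence $S_L\cap\alt(A_L,\sigma_L)=\ker\tau$ has dimension $2^n-1$; since both the intersection and the formation of $\alt$ commute with the extension $F\subseteq L$, $\dim_F(S\cap\alt(A,\sigma))=2^n-1=\dim_FS-1$, and as $1\notin\alt(A,\sigma)$ we obtain $S=F\oplus(S\cap\alt(A,\sigma))=F+S_0$; in particular $S\subseteq F+\alt(A,\sigma)$.

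For \emph{uniqueness}, observe that every valid $S$ has $S_0=S\cap\alt(A,\sigma)\subseteq W:=\{x\in\alt(A,\sigma)\mid x^2\in F\}$ with $\dim_FS_0=2^n-1$. If one proves that $W$ is an $F$-subspace of dimension $2^n-1$ — the natural generalisation of (\ref{x2}) — then $S_0=W$ for every valid $S$ and $S=F+W$ is literally unique; more modestly, I would show by induction on $n$ that any valid $S$ is compatible with some decomposition $(A,\sigma)\simeq\bigotimes(Q_i,\sigma_i)$, i.e.\ $S=\bigotimes(S\cap Q_i)$ with $S\cap Q_i=F+Fv_i$ and $Fv_i=\alt(Q_i,\sigma_i)$. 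Then the quadratic form $x\mapsto x^2$ on $S$ is the one associated to the bilinear Pfister form $\lla v_1^2,\dots,v_n^2\rra$; this form is independent of the decomposition by \cite{dolphin3}, and it determines the $F$-algebra $S$ up to isomorphism (it is the Frobenius algebra of that quasi-Pfister form). The main obstacle is exactly this last step: producing a compatible decomposition — equivalently, showing the square-central alternating elements form a subspace of the expected size. This is where the hypothesis $\dim_FS=2^{r_F(S)}$ has to be used beyond the bookkeeping above, and where the characteristic-$2$ "peel off one quaternion factor" argument enters and is delicate.
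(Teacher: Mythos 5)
The paper offers no proof of this statement to compare against: it is imported verbatim from \cite{mahmoudi2} (the proof reads ``See (4.6) and (5.10)''), so your attempt has to be judged on its own. The existence clause and the clause $S=F+S_0$ are handled essentially correctly, with one real soft spot: the inference ``$\End_{S_L}(V)=C_{A_L}(S_L)=S_L$, so the faithful module $V$ is free of rank one'' is not valid for an arbitrary self-centralizing commutative subalgebra (there are maximal commutative subalgebras of $M_N(L)$ of dimension $\neq N$, which are automatically self-centralizing and act non-freely). You must use the specific structure of $S_L$: after passing to an algebraic closure, $S_L\simeq L[s_1,\dots,s_n]/(s_1^2,\dots,s_n^2)$ is local and self-injective with one-dimensional socle, so faithfulness produces an embedding $S_L\hookrightarrow V$, self-injectivity splits it off, and the self-centralizing hypothesis kills the complement. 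That argument is available but absent; as written the step is an unjustified assertion whose stated justification is insufficient.

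The genuine gap is the final clause --- uniqueness of $S$ up to isomorphism --- which you explicitly concede is ``the main obstacle'' and for which you only name two candidate strategies without executing either. Strategy (a), showing that $W=\{x\in\alt(A,\sigma)\mid x^2\in F\}$ is a subspace of dimension $2^n-1$, would give uniqueness of $S$ \emph{as a set}; but the paper pointedly presents set-theoretic uniqueness as a special feature of the biquaternion case (Corollary \ref{phi}, resting on Proposition \ref{x2}), so this route cannot be expected to work for general $n$. Strategy (b), producing a decomposition of $(A,\sigma)$ compatible with a given $S$ and then invoking the decomposition-independence of the Pfister invariant from \cite{dolphin3}, is precisely the substantive content of the cited results of \cite{mahmoudi2}, and it is exactly the step you do not supply. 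In its present form the proposal proves the first two thirds of the theorem and leaves the hardest third open.
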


\begin{proof}
See \cite[(4.6) and (5.10)]{mahmoudi2}.
\end{proof}

\begin{notation}
We denote the algebra $S$ in (\ref{mah}) by $\Phi(A,\sigma)$.
\end{notation}

 The next result shows that for biquaternion algebras with orthogonal involution, the subalgebra $\Phi(A,\sigma)$ is unique as a set.
\begin{cor}\label{phi}
Let $(A,\sigma)$ be a decomposable biquaternion algebra with involution of orthogonal type over $F$.
Then $\Phi(A,\sigma)=Q^+$.
\end{cor}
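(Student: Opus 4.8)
The plan is to prove the two inclusions $\Phi(A,\sigma)\subseteq Q^+$ and $Q^+\subseteq\Phi(A,\sigma)$, relying only on the intrinsic description of $\Phi(A,\sigma)$ supplied by (\ref{mah}) together with (\ref{x2}) and (\ref{q} (2)). The subtlety to keep in mind is that (\ref{mah}) gives uniqueness of $\Phi(A,\sigma)$ only \emph{up to isomorphism}; the extra content of the corollary is that for biquaternion algebras $\Phi(A,\sigma)$ is pinned down as an actual subset of $A$, and this will come from the self-centralizing property.

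First I would show $\Phi(A,\sigma)\subseteq Q^+$. By (\ref{mah}) one may write $\Phi(A,\sigma)=F+S_0$ with $S_0=\Phi(A,\sigma)\cap\alt(A,\sigma)$, and $x^2\in F$ for every $x\in\Phi(A,\sigma)$. In particular each $x\in S_0$ is an alternating element with $x^2\in F$, so (\ref{x2}) gives $x\in\alt(A,\sigma)^+\cup\alt(A,\sigma)^-$. Since $\car F=2$ we have $\alt(A,\sigma)^+=\alt(A,\sigma)^-$ (as recorded right after (\ref{not})), hence $S_0\subseteq\alt(A,\sigma)^+$ and therefore $\Phi(A,\sigma)=F+S_0\subseteq F+\alt(A,\sigma)^+=Q^+$.

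For the reverse inclusion, recall from (\ref{q} (2)) that $Q^+$ is a commutative subalgebra of $A$. Since $\Phi(A,\sigma)\subseteq Q^+$ by the previous step, every element of $Q^+$ commutes with every element of $\Phi(A,\sigma)$, that is, $Q^+\subseteq C_A(\Phi(A,\sigma))$. But $\Phi(A,\sigma)$ is self-centralizing by (\ref{mah}), so $C_A(\Phi(A,\sigma))=\Phi(A,\sigma)$, which forces $Q^+\subseteq\Phi(A,\sigma)$. Combining the two inclusions gives $\Phi(A,\sigma)=Q^+$, as desired.

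I do not anticipate a genuine obstacle: the argument is short, and the only point requiring care is the implication ``$x$ alternating and $x^2\in F$ $\Rightarrow$ $x\in\alt(A,\sigma)^+$'', which is exactly what (\ref{x2}) and the characteristic-two identity $\alt(A,\sigma)^+=\alt(A,\sigma)^-$ deliver. If one wishes, one can also check directly that $Q^+$ meets every hypothesis of (\ref{mah}) — it is $\sigma$-invariant since alternating elements are symmetric in characteristic two, it is self-centralizing since a maximal commutative subalgebra is, it satisfies $x^2\in F$ by (\ref{q} (2)), and $\dim_F Q^+=4=2^2$ with $r_F(Q^+)=2$ (a basis $(u,v,uv)$ of $\alt(A,\sigma)^+$ as in (\ref{bas} (1)) shows $Q^+$ is generated by two elements but not by one, since $\dim_F F[t]\le 2$ whenever $t^2\in F$) — so that $Q^+$ is literally an instance of the subalgebra $S$ of (\ref{mah}); but this verification is not logically needed for the proof above.
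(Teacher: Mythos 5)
Your proof is correct. The first inclusion $\Phi(A,\sigma)\subseteq Q^+$ is exactly the paper's argument: write $\Phi(A,\sigma)=F+S_0$ with $S_0=\Phi(A,\sigma)\cap\alt(A,\sigma)$, note every element of $S_0$ is alternating and square-central, and apply (\ref{x2}) together with $\alt(A,\sigma)^+=\alt(A,\sigma)^-$ in characteristic $2$. Where you diverge is the reverse inclusion: the paper simply observes that $S_0$ and $\alt(A,\sigma)^+$ are both $3$-dimensional (using $\dim_F\Phi(A,\sigma)=4$ from (\ref{mah})) and concludes $S_0=\alt(A,\sigma)^+$ by dimension count, whereas you invoke the commutativity of $Q^+$ from (\ref{q} (2)) and the self-centralizing property of $\Phi(A,\sigma)$ to get $Q^+\subseteq C_A(\Phi(A,\sigma))=\Phi(A,\sigma)$. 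Both steps are one line; the paper's is marginally more economical in that it needs nothing beyond the dimensions already recorded, while yours has the small virtue of isolating precisely which hypothesis of (\ref{mah}) forces the set-theoretic (not just isomorphism-class) uniqueness of $\Phi(A,\sigma)$ inside a biquaternion algebra, namely that it is self-centralizing. Your closing remark that $Q^+$ itself satisfies all the hypotheses of (\ref{mah}) is a correct, if logically superfluous, sanity check.
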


\begin{proof}
Write $\Phi(A,\sigma)=F+S_0$, where $S_0=\Phi(A,\sigma)\cap\alt(A,\sigma)$.
Since every element of $\Phi(A,\sigma)$ is square-central, using (\ref{x2}) we have $S_0\subseteq\alt(A,\sigma)^+$.
Then $S_0=\alt(A,\sigma)^+$ by dimension count,
hence $\Phi(A,\sigma)=F+\alt(A,\sigma)^+=Q^+$.
\end{proof}

\begin{lem}\label{setalt}
Let $(A,\sigma)$ be a totally decomposable algebra of degree $2^n$ with orthogonal involution over $F$.
If there exists a set $\{u_1,\cdots,u_n\}\subseteq\alt(A,\sigma)$ consisting of pairwise commutative square-central units such that
$u_{i_1}\cdots u_{i_l}\in\alt(A,\sigma)$ for every $1\leq l\leq n$ and $1\leq i_1<\cdots<i_l\leq n$,
then $\Phi(A,\sigma)\simeq F[u_1,\cdots,u_n]$.
\end{lem}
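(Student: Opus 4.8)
The plan is to exhibit the subalgebra $S=F[u_1,\dots,u_n]\subseteq A$ and verify that it satisfies all the defining properties of $\Phi(A,\sigma)$ listed in (\ref{mah}); uniqueness up to isomorphism in (\ref{mah}) will then give $\Phi(A,\sigma)\simeq S$. First I would observe that since the $u_i$ pairwise commute and each satisfies $u_i^2\in F$, the algebra $S=F[u_1,\dots,u_n]$ is commutative and spanned as an $F$-vector space by the $2^n$ products $u_{i_1}\cdots u_{i_l}$ with $1\le i_1<\cdots<i_l\le n$ (the empty product being $1$). Every such product is again square-central: in characteristic $2$, for commuting square-central elements $x,y$ we have $(xy)^2=x^2y^2\in F$, so by induction every monomial is square-central, and hence $x^2\in F$ for every $x\in S$ (writing $x$ as an $F$-combination of monomials, squaring, and using that the Frobenius $a\mapsto a^2$ is additive in characteristic $2$ together with the fact that distinct monomials still commute so no cross terms survive — actually cross terms $\mathfrak{b}$ do appear, so one must argue instead that $S$ is a commutative $F$-algebra in which every element is a square root of an element of $F$; the cleanest route is: $S$ is a quotient of $F[t_1,\dots,t_n]/(t_i^2-a_i)$ where $a_i=u_i^2$, and in that ring in characteristic $2$ every element has square in $F$ because the $a_i$ lie in $F$ and squaring kills the off-diagonal).

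Next I would show $S$ is $\sigma$-invariant and symmetric, i.e. $\sigma(x)=x$ for all $x\in S$: by hypothesis each monomial $u_{i_1}\cdots u_{i_l}$ lies in $\alt(A,\sigma)$, and an alternating element $a=b-\sigma(b)$ which is also square-central... hmm, rather: in characteristic $2$, $\alt(A,\sigma)\subseteq\sym(A,\sigma)$ since $a-\sigma(a)$ satisfies $\sigma(a-\sigma(a))=\sigma(a)-a=a-\sigma(a)$ using $-1=1$. So each monomial is symmetric, hence $1$ and all monomials are $\sigma$-fixed, hence $S\subseteq F+\alt(A,\sigma)$ and $\sigma|_S=\id$. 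Then I would establish that $\dim_F S=2^n$: the $2^n$ monomials span $S$, and they are linearly independent because $A$ has degree $2^n$ and a commutative $\sigma$-invariant subalgebra of $A$ on which all elements are square-central has dimension at most... this is exactly where I invoke the structure from (\ref{mah}) — more precisely, I would note that a totally decomposable $(A,\sigma)$ of degree $2^n$ contains $\Phi(A,\sigma)$ of dimension $2^n$ which is self-centralizing, and any commutative square-central $\sigma$-invariant subalgebra $S$ of $A$ is contained in a maximal such one, forcing $\dim_F S\le 2^n$; combined with the spanning statement this gives equality and also that $S$ is self-centralizing (since $\dim_F C_A(S)=\dim_F A/\dim_F S=2^n/1\cdot\ldots$ — one uses the double centralizer relation $\dim C_A(S)\cdot\dim(S\text{-generated central...})$, or more simply that $S$ maximal commutative of dimension $2^n$ in a degree-$2^n$ algebra is automatically self-centralizing).

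The main obstacle I expect is the linear independence of the $2^n$ monomials $u_{i_1}\cdots u_{i_l}$, equivalently the assertion $\dim_F F[u_1,\dots,u_n]=2^n$ rather than something smaller. The hypotheses give commutativity, square-centrality, and the alternating condition, but a priori there could be unexpected $F$-linear relations among products. The way around this is to use (\ref{mah}): since $(A,\sigma)$ is totally decomposable of degree $2^n$, it has a subalgebra $\Phi(A,\sigma)$ of dimension exactly $2^n$ that is self-centralizing and consists of square-central elements, and the last sentence of (\ref{mah}) says any subalgebra with the properties "symmetric, self-centralizing, square-central, dimension $2^n$" is iso to it. So it suffices to show $S$ has all these properties; for the dimension, one argues that $S\subseteq F+\alt(A,\sigma)$ together with $S$ commutative and square-central forces $S\subseteq\Phi(A,\sigma)$ up to the uniqueness statement — concretely, $C_A(S)\supseteq S$ and $C_A(S)$ being $\sigma$-invariant with $\sigma$ restricting to an orthogonal involution (or to the identity on the square-central part), one shows $S$ cannot be properly enlarged inside $\Phi$-type subalgebras, and a dimension count in the totally decomposable setting pins $\dim_F S=2^n$. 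Once $\dim_F S=2^n$ and the symmetry/self-centralizing/square-central properties are in hand, (\ref{mah}) yields $\Phi(A,\sigma)\simeq S=F[u_1,\dots,u_n]$, completing the proof.
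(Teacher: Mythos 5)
Your overall strategy is the same as the paper's: exhibit $S=F[u_1,\dots,u_n]$ and check that it has the properties listed in (\ref{mah}), then invoke the uniqueness statement there. The parts you do carry out are fine: commutativity, the fact that $x^2\in F$ for all $x\in S$ (in a \emph{commutative} algebra in characteristic $2$ the cross terms $2xy$ vanish, so your worry there is unfounded and the quotient-of-$F[t_1,\dots,t_n]/(t_i^2-a_i)$ argument is unnecessary but harmless), and the observation that $\alt(A,\sigma)\subseteq\sym(A,\sigma)$ in characteristic $2$, so $S$ is symmetric.

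The genuine gap is exactly the step you flag as the main obstacle and then do not close: the linear independence of the $2^n$ monomials, equivalently $\dim_F S=2^n$, and the self-centralizing property. Every argument you offer there produces only an \emph{upper} bound: spanning by $2^n$ monomials gives $\dim_F S\le 2^n$, and containment of $S$ in some $2^n$-dimensional maximal commutative square-central subalgebra again gives $\dim_F S\le 2^n$; neither forces equality, and ``$S$ cannot be properly enlarged inside $\Phi$-type subalgebras'' is asserted, not proved. Note also that the set-theoretic containment $S\subseteq\Phi(A,\sigma)$ you lean on is not available here: for general degree $2^n$ the subalgebra $\Phi(A,\sigma)$ is only determined up to isomorphism, and the set-uniqueness statement (\ref{phi}) is proved only for biquaternion algebras. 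The paper closes this gap by citing Jacobson \cite[(2.2.3)]{jacob}, which yields directly that $F[u_1,\dots,u_n]$ is self-centralizing (whence $\dim_F S=2^n$); alternatively one can extract independence from the unused part of the hypothesis, namely that \emph{all} products $u_{i_1}\cdots u_{i_l}$ lie in $\alt(A,\sigma)$ while $F^\times\cap\alt(A,\sigma)=\emptyset$ for an orthogonal involution in characteristic $2$ --- a relation among monomials would force some nonzero scalar into $\alt(A,\sigma)$. As written, your proof does not establish the dimension count, so the appeal to (\ref{mah}) cannot yet be made.
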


\begin{proof}
By \cite[(2.2.3)]{jacob}, $S:=F[u_1,\cdots,u_n]$ is self-centralizing.
The other required properties of $S$, stated in (\ref{mah}), are easily verified.
\end{proof}

\begin{defi}
A set $\{u_1,\cdots,u_n\}\subseteq\alt(A,\sigma)$ as in (\ref{setalt}) is called a {\it set of alternating generators} of $\Phi(A,\sigma)$.
\end{defi}

We recall the following definition from \cite{dolphin3}.
\begin{defi}
Let $(A,\sigma)=(Q_1,\sigma_1)\otimes\cdots\otimes(Q_n,\sigma_n)$ be a totally decomposable algebra with orthogonal involution over $F$.
Let $\alpha_i\in F^\times$, $i=1,\cdots,n$, be a representative of the class $\disc\sigma_i\in F^\times/F^{\times2}$.
The bilinear $n$-fold Pfister form $\lla\alpha_1,\cdots,\alpha_n\rra$ is called the {\it Pfister invariant} of $(A,\sigma)$ and is denoted by $\mathfrak{Pf}(A,\sigma)$.
\end{defi}
Note that by \cite[(7.5)]{dolphin3}, $\mathfrak{Pf}(A,\sigma)$ is independent of the decomposition of $(A,\sigma)$.
Also, as observed in \cite[pp. 223-224]{mahmoudi2}, $\mathfrak{Pf}(A,\sigma)\simeq\lla\alpha_1,\cdots,\alpha_n\rra$ if and only if there exists a set of alternating generators $\{u_1,\cdots,u_n\}$ of $\Phi(A,\sigma)$ such that $u_i^2=\alpha_i\in F^\times$, $i=1,\cdots,n$.

\begin{lem}\label{repl}
Let $\lla\alpha,\beta\rra$ be an isotropic bilinear Pfister form over $F$.
If $\alpha\beta\neq0$, then $\lla\alpha,\beta\rra\simeq\lla\alpha,\beta+\alpha^{-1}\lambda^2\rra$ for every $\lambda\in F$.
\end{lem}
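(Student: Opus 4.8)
The plan is to work directly with the underlying bilinear space and exhibit an explicit isometry. Recall that $\lla\alpha,\beta\rra = \la1,\alpha\ra\otimes\la1,\beta\ra$ has underlying space with orthogonal basis $e_1, e_2, e_3, e_4$ on which the form takes values $1, \alpha, \beta, \alpha\beta$ respectively. Since $\car F = 2$, the form $\lla\alpha,\beta\rra$ is isotropic if and only if it is metabolic, hence (being a Pfister form) hyperbolic; in particular it is the unique metabolic bilinear $2$-fold Pfister form up to isometry. The key reduction is: to prove $\lla\alpha,\beta\rra\simeq\lla\alpha,\beta+\alpha^{-1}\lambda^2\rra$, since both sides are bilinear $2$-fold Pfister forms (as $\alpha\beta\neq0$ and $\alpha(\beta+\alpha^{-1}\lambda^2) = \alpha\beta+\lambda^2 \neq 0$, using that $\lla\alpha,\beta\rra$ isotropic forces $\alpha\beta$ to be... actually one checks $\alpha\beta + \lambda^2 \ne 0$ follows because otherwise $\lambda^2 = \alpha\beta$, but $\alpha\beta \in D_F(\lla\alpha,\beta\rra)$ already since the form is isotropic hence universal, giving a separate easy subcase). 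So it suffices to show the right-hand form is also isotropic; then both are hyperbolic bilinear $2$-fold Pfister forms and hence isometric.

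First I would establish universality: an isotropic bilinear Pfister form $\lla\alpha,\beta\rra$ over $F$ represents every element of $F^\times$. This is standard — an isotropic bilinear form over any field contains a hyperbolic plane (here one uses that isotropic means $\mathfrak b(v,v)=0$ for some $v\neq0$, and over a field this still splits off a metabolic plane), and a metabolic plane $\la a, -a\ra$-type summand makes the form universal. Next I would use the chain-equivalence / common-slot behavior of Pfister forms: since $\lla\alpha,\beta\rra$ is isotropic, write $\lla\alpha,\beta\rra \simeq \lla\alpha\rra\otimes\la1,\beta\ra$; the pure part $\la\alpha,\beta,\alpha\beta\ra$ of $\lla\alpha,\beta\rra$ represents $\beta + \alpha^{-1}\lambda^2$ as well, because $\alpha\cdot(\alpha^{-1}\lambda^2) + \beta\cdot 1 \cdot$(suitable coordinates) — concretely, take the vector $x = \alpha^{-1}\lambda\, e_2 + e_3$ in the pure part, then $\lla\alpha,\beta\rra(x) = \alpha\cdot(\alpha^{-1}\lambda)^2 + \beta = \alpha^{-1}\lambda^2 + \beta$. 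Hence by the standard subform-representation lemma for bilinear Pfister forms in characteristic $2$ (e.g. \cite[Ch.\ III]{lam} or \cite{elman}), $\lla\alpha,\beta\rra\simeq\lla\alpha, \beta+\alpha^{-1}\lambda^2\rra$, since a $2$-fold bilinear Pfister form $\lla\alpha\rra\otimes\la1,b\ra$ containing $\lla\alpha\rra\otimes\la1,b'\ra$'s pure generator $b'$ in its pure part is isometric to $\lla\alpha, b'\rra$.

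The cleanest route, which I would actually write up, combines these: the element $b' := \beta + \alpha^{-1}\lambda^2$ lies in $D_F(\la\alpha,\beta,\alpha\beta\ra)$ by the explicit vector above, and $\la\alpha,\beta,\alpha\beta\ra \perp \la b' c\ra$... — better, invoke that for bilinear $2$-fold Pfister forms $\pi = \lla\alpha,\beta\rra$, an element $b'$ is represented by the pure part of $\pi$ if and only if $\pi \simeq \lla\alpha', b'\rra$ for some $\alpha'$ with $\la\alpha'\ra\perp\la b'\ra \subseteq$ pure part, and here one can take $\alpha' = \alpha$ after checking the complementary one-dimensional form works out, using $b_\pi$-orthogonality and Witt cancellation for bilinear forms (valid since $\pi$ is isotropic, so no anisotropic-kernel obstruction and in any case nondegenerate bilinear forms admit Witt cancellation). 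The main obstacle is handling characteristic $2$ carefully: Witt cancellation for bilinear forms does hold, but one must ensure all forms in play are nondegenerate (they are, since $\alpha\beta\neq0$ and $\alpha b'\neq0$), and one must not conflate the bilinear form with its (totally singular) associated quadratic form. Once universality and the pure-part representation are in hand, the isometry follows formally from the structure theory of bilinear Pfister forms, with no genuinely hard step remaining.
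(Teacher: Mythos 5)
Your argument has a genuine gap, and in fact rests on three statements that are false over a field of characteristic $2$ (which is the standing hypothesis in this section). First, it is not true that there is a unique isotropic (equivalently, metabolic) bilinear $2$-fold Pfister form up to isometry: for $\beta\in F^\times\setminus F^{\times2}$ the forms $\lla 1,1\rra=\langle1,1,1,1\rangle$ and $\lla1,\beta\rra=\langle1,1,\beta,\beta\rangle$ are both isotropic, but their value sets are $F^{2}$ and $F^{2}+\beta F^{2}$ respectively, so they are not isometric. Hence your main reduction --- ``it suffices to show the right-hand form is also isotropic'' --- collapses. (Relatedly, a bilinear Pfister form represents $1$ on the diagonal, so it is never alternating and never hyperbolic in characteristic $2$; isotropic only gives metabolic.) Second, isotropic bilinear forms in characteristic $2$ are not universal: an isotropic vector only splits off a metabolic plane with Gram matrix $\left(\begin{smallmatrix}a&1\\1&0\end{smallmatrix}\right)$, whose value set is $aF^{2}$; the example $\lla1,1\rra$ above is isotropic yet represents only squares. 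Third, Witt cancellation fails for symmetric bilinear forms in characteristic $2$ (e.g.\ $\langle1\rangle\perp\langle1,1\rangle$ is isometric to $\langle1\rangle$ plus an alternating plane, while $\langle1,1\rangle$ is not alternating), so the step where you upgrade ``$\beta+\alpha^{-1}\lambda^2$ is represented by the pure part, hence is a slot'' to ``and the other slot can be taken to be $\alpha$'' has no support. Moreover, the slot lemma you invoke (a value of the pure part is a slot) is a theorem about \emph{anisotropic} bilinear Pfister forms, whereas here the form is isotropic by hypothesis.

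The one correct and useful observation in your sketch is that the vector $\alpha^{-1}\lambda e_2+e_3$ shows $\beta+\alpha^{-1}\lambda^{2}\in D_F(\la\alpha,\beta,\alpha\beta\ra)$; but by itself this does not yield the isometry. The paper instead exploits the isotropy hypothesis through the explicit criterion that $\lla\alpha,\beta\rra$ is isotropic if and only if $\alpha\in F^{\times2}$ or $\beta\in D_F\langle1,\alpha\rangle$, and then in each case performs a chain of isometries using only the elementary identities $\lla a,b\rra\simeq\lla a,b(x^2-ay^2)\rra$ and slot switching. Some such case analysis, or an equally explicit isometry, is unavoidable; the structure theory you appeal to (classification of metabolic forms by dimension, universality of isotropic forms, Witt cancellation) simply is not available in characteristic $2$.
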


\begin{proof}
Since $\lla\alpha,\beta\rra$ is isotropic, by \cite[(4.14)]{elman} either $\alpha\in F^{\times2}$ or $\beta\in D_F\langle1,\alpha\rangle$.
If $\alpha\in F^{\times2}$,
 using \cite[(4.15 (2))]{elman} and \cite[(4.15 (1))]{elman} we obtain
\begin{align*}
\lla\alpha,\beta\rra&\simeq\lla\beta+\alpha^{-1}\lambda^2,\alpha\beta\rra
\simeq\lla\beta+\alpha^{-1}\lambda^2,\alpha\beta(\alpha^{-1}\lambda^2-(\beta+\alpha^{-1}\lambda^2))\rra\\
&\simeq\lla\beta+\alpha^{-1}\lambda^2,\alpha\beta^2\rra\simeq\lla\alpha,\beta+\alpha^{-1}\lambda^2\rra.
\end{align*}
If $\beta\in D_F\langle1,\alpha\rangle$, then there exist $b,c\in F$ such that $\beta=b^2+c^2\alpha$.
Let $s=\alpha^{-1}\beta^{-1}\lambda\in F$.
Using \cite[(4.15 (1))]{elman} we obtain
\begin{align*}
\lla\alpha,\beta\rra&\simeq\lla\alpha,\beta((1+cs\alpha)^2-(bs)^2\alpha)\rra
\simeq\lla\alpha,\beta(1+c^2s^2\alpha^2+b^2s^2\alpha)\rra\\
&\simeq\lla\alpha,\beta+s^2\alpha\beta(c^2\alpha+b^2)\rra\simeq\lla\alpha,\beta+s^2\alpha\beta^2\rra\simeq
\lla\alpha,\beta+\alpha^{-1}\lambda^2\rra.\quad \qedhere
\end{align*}
\end{proof}

\begin{lem}\label{pfq}
Let $(A,\sigma)$ be a decomposable biquaternion algebra with involution of orthogonal type over $F$ and let $\alpha,\beta\in F^\times$.
Then $\mathfrak{Pf}(A,\sigma)\simeq\lla\alpha,\beta\rra$ if and only if $q^+_\sigma\simeq\langle\alpha,\beta,\alpha\beta\rangle_q$.
\end{lem}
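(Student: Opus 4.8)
The plan is to prove the two implications of the "if and only if" separately. Throughout I will use: $\Phi(A,\sigma)=Q^+$ by \ref{phi}; that for $x\in\alt(A,\sigma)^+$ one has $q^+_\sigma(x)=x^2\in F$ (by \ref{x2}, $x^2\in F$, and then $q_\sigma(x)^2=\nrd_A(x)=(x^2)^2$ forces $q_\sigma(x)=x^2$ since $\car F=2$); and that, $Q^+$ being commutative (\ref{q}(2)), in any basis $(u,v,uv)$ of $\alt(A,\sigma)^+$ one has $q^+_\sigma(au+bv+c\,uv)=a^2u^2+b^2v^2+c^2u^2v^2$, so $q^+_\sigma$ is diagonalized there. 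For $(\Rightarrow)$, suppose $\mathfrak{Pf}(A,\sigma)\simeq\lla\alpha,\beta\rra$. By the description of $\mathfrak{Pf}$ via alternating generators (\cite[pp.\ 223--224]{mahmoudi2}) there is a set of alternating generators $\{u,v\}$ of $\Phi(A,\sigma)$ with $u^2=\alpha$, $v^2=\beta$. Since $u,v,uv$ are alternating and square-central, by \ref{x2} (and $\alt(A,\sigma)^+=\alt(A,\sigma)^-$ in characteristic $2$) they lie in $\alt(A,\sigma)^+$; as $1,u,v,uv$ span $F[u,v]=Q^+$, which is $4$-dimensional (\ref{phi}, \ref{setalt}, \ref{mah}), the triple $(u,v,uv)$ is a basis of $\alt(A,\sigma)^+$, and the displayed formula gives $q^+_\sigma\simeq\langle\alpha,\beta,\alpha\beta\rangle_q$.

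For $(\Leftarrow)$, suppose $q^+_\sigma\simeq\langle\alpha,\beta,\alpha\beta\rangle_q$. Fix a decomposition of $(A,\sigma)$; by \ref{bas}(4) it produces $\gamma,\delta\in F^\times$ with $q^+_\sigma\simeq\langle\gamma,\delta,\gamma\delta\rangle_q$, $\mathfrak{Pf}(A,\sigma)\simeq\lla\gamma,\delta\rra$, and a basis $(u_0,v_0,u_0v_0)$ of $\alt(A,\sigma)^+$ with $u_0^2=\gamma$, $v_0^2=\delta$. Since totally singular quadratic forms over $F$ are classified by dimension together with the set of values represented, the isometry $\langle\alpha,\beta,\alpha\beta\rangle_q\simeq\langle\gamma,\delta,\gamma\delta\rangle_q$ yields $V:=\alpha F^2+\beta F^2+\alpha\beta F^2=\gamma F^2+\delta F^2+\gamma\delta F^2$. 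In particular $\alpha\in V$, so $\alpha=a^2\gamma+b^2\delta+c^2\gamma\delta$ with $(a,b,c)\neq(0,0,0)$, and then $u:=au_0+bv_0+c\,u_0v_0\in\alt(A,\sigma)^+$ is a unit with $u^2=q^+_\sigma(u)=\alpha$. Put $U:=\{x\in\alt(A,\sigma)^+\mid ux\in\alt(A,\sigma)\}$, a hyperplane of $\alt(A,\sigma)^+$ not containing $u$, and $W:=\{q^+_\sigma(x)\mid x\in U\}$, an $F^2$-subspace of $F$ with $V=\alpha F^2+W$. If $W=\{0\}$ then $V=\alpha F^2$, forcing $\alpha,\beta,\gamma,\delta\in F^{\times2}$ and hence $\mathfrak{Pf}(A,\sigma)\simeq\lla1,1\rra\simeq\lla\alpha,\beta\rra$; so assume $W\neq\{0\}$. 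Choosing a unit $v_1\in U$ and invoking \ref{bas}(1), the pair $\{u,v_1\}$ is a set of alternating generators of $Q^+$ (they commute, are square-central units, and $u,v_1,uv_1\in\alt(A,\sigma)$), so $\mathfrak{Pf}(A,\sigma)\simeq\lla\alpha,\mu\rra$ with $\mu:=v_1^2$; moreover, as $v_1$ varies over units of $U$, the square $\mu$ ranges over all of $W\setminus\{0\}$. Hence it suffices to produce $\mu\in W\setminus\{0\}$ with $\lla\alpha,\mu\rra\simeq\lla\alpha,\beta\rra$, for then $\mathfrak{Pf}(A,\sigma)\simeq\lla\alpha,\beta\rra$.

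To finish, note that $\lla\alpha,\beta\rra$ and $\lla\alpha,\mu\rra$ (the latter $\simeq\mathfrak{Pf}(A,\sigma)\simeq\lla\gamma,\delta\rra$) represent the same set of values $F^2+V$, hence are simultaneously isotropic or anisotropic. If $\lla\alpha,\beta\rra$ is anisotropic, then $V$ is $3$-dimensional over $F^2$ and $\alpha\notin F^{\times2}$, so $W$ and $\beta F^2+\alpha\beta F^2=\beta\cdot D_F\langle1,\alpha\rangle$ are both $2$-dimensional $F^2$-subspaces of the $3$-dimensional $V$ and therefore meet nontrivially; picking $\mu$ in the intersection gives $\mu/\beta\in D_F\langle1,\alpha\rangle$, whence $\lla\alpha,\mu\rra\simeq\lla\alpha,\beta\rra$ by roundness of $\langle1,\alpha\rangle$. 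If instead these forms are isotropic, then by \cite[(4.14)]{elman} either $\mu,\beta\in D_F\langle1,\alpha\rangle$ — in which case $\mu/\beta\in D_F\langle1,\alpha\rangle$ (a group, again by roundness) and we conclude as before — or $\alpha\in F^{\times2}$, in which case $\lla\alpha,\mu\rra\simeq\lla1,\mu\rra$ and $\lla\alpha,\beta\rra\simeq\lla1,\beta\rra$, and from $F^2+\mu F^2=V=F^2+\beta F^2$ one passes from $\lla1,\mu\rra$ to $\lla1,\beta\rra$ by applying \ref{repl} (with first slot $1$) together with scaling by squares. In all cases $\mathfrak{Pf}(A,\sigma)\simeq\lla\alpha,\mu\rra\simeq\lla\alpha,\beta\rra$, as required.

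I expect the $(\Leftarrow)$ direction to be the main obstacle: the invariant $q^+_\sigma$ retains only an $F^2$-subspace of $F$, whereas $\mathfrak{Pf}(A,\sigma)$ is a strictly finer bilinear-Pfister invariant (for instance the isometry class of $\langle1,\alpha\rangle$ is not recovered from the value set $F^2+\alpha F^2$ alone). One must therefore reconstruct $\mathfrak{Pf}(A,\sigma)$ from $q^+_\sigma$ by working inside the algebra — realizing a value $\alpha$ of $q^+_\sigma$ as the square of an alternating element, completing it to a set of alternating generators of $Q^+$, and then controlling the square $\mu$ of the second generator. The part that genuinely requires \ref{repl} is precisely the absorption of the remaining ambiguity in $\mu$ in the isotropic range; the anisotropic range is handled by the dimension count inside $V$ and by roundness of binary bilinear Pfister forms.
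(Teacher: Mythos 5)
Your proof is correct, and the forward implication coincides with the paper's (alternating generators $\{u,v\}$ with $u^2=\alpha$, $v^2=\beta$ give the orthogonal basis $(u,v,uv)$ of $\alt(A,\sigma)^+$). The converse, however, is genuinely different. The paper argues directly inside the algebra: from the isometry $q_\sigma^+\simeq\langle\alpha,\beta,\alpha\beta\rangle_q$ it takes a basis $(x,y,z)$ of $\alt(A,\sigma)^+$ with $x^2=\alpha$, $y^2=\beta$, $z^2=\alpha\beta$, writes $xy=a+bx+cy+dz$ in $\Phi(A,\sigma)=F+\alt(A,\sigma)^+$, and replaces $y$ by $y'=y+\alpha^{-1}ax$ to kill the scalar part; this yields alternating generators with squares $\alpha$ and $\beta+\alpha^{-1}a^2$, and squaring the relation shows $\lla\alpha,\beta\rra$ is isotropic when $a\neq0$, so Lemma~\ref{repl} finishes in two lines. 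You instead retain only the value set $V=D_F(q_\sigma^+)\cup\{0\}$, produce $u$ with $u^2=\alpha$, pass to the hyperplane $U$ and its square-set $W$, and then split into anisotropic/isotropic cases, using a dimension count in $V$ and roundness of $\langle1,\alpha\rangle$ to normalize the second generator's square. What your route buys is the (correct, and slightly stronger-looking) observation that $\mathfrak{Pf}(A,\sigma)$ is already determined by the value set of $q_\sigma^+$; what it costs is length and several spots that need a line of justification: that $(u,v_1,uv_1)$ is a basis of $\alt(A,\sigma)^+$ for \emph{every} unit $v_1\in U$ (your citation of~\ref{bas}(1) extends $u$ to \emph{some} such basis, not to one through a prescribed $v_1$; the independence does hold, but it requires the short computation showing $uv_1\notin Fu+Fv_1$); and, in the subcase $\alpha\in F^{\times2}$, that $F^2+\mu F^2=V$ is not automatic for an arbitrary $\mu\in W\setminus\{0\}$ — you must pick $\mu\in W\setminus F^2$ when $W\not\subseteq F^2$ and treat $W\subseteq F^2$ (where everything collapses to $\lla1,1\rra$) separately. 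These are patchable, not fatal; still, the paper's single substitution $y\mapsto y+\alpha^{-1}ax$ replaces your entire case analysis, and both arguments funnel through Lemma~\ref{repl} at exactly the same point, namely absorbing the residual square $\alpha^{-1}a^2$ (resp.\ the ambiguity in $\mu$) in the isotropic range.
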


\begin{proof}
If $\mathfrak{Pf}(A,\sigma)\simeq\lla\alpha,\beta\rra$, then there exists a set of alternating generators $\{u,v\}$ of $\Phi(A,\sigma)$ such that $u^2=\alpha$ and $v^2=\beta$.
By (\ref{phi}) and (\ref{bas} (2)), $(u,v,uv)$ is an orthogonal basis of $\alt(A,\sigma)^+$, hence $q_\sigma^+\simeq\langle\alpha,\beta,\alpha\beta\rangle_q$.

To prove the converse,
choose a basis $(x,y,z)$ of $\alt(A,\sigma)^+$ with $x^2=\alpha$, $y^2=\beta$ and $z^2=\alpha\beta$.
Consider the element $xy\in \Phi(A,\sigma)$.
By (\ref{phi}), $\Phi(A,\sigma)=F+\alt(A,\sigma)^+$.
Thus, there exist $a,b,c,d\in F$ such that
\begin{align}\label{eq2}
xy=a+bx+cy+dz.
\end{align}
If $a=0$ then $xy=bx+cy+dz\in \alt(A,\sigma)^+$, which implies that $\{x,y\}$ is a set of alternating generators of $\Phi(A,\sigma)$.
As $x^2=\alpha$ and $y^2=\beta$ we obtain $\mathfrak{Pf}(A,\sigma)\simeq\lla\alpha,\beta\rra$.
So suppose that $a\neq 0$.
By squaring both sides of (\ref{eq2}), we obtain $\alpha\beta=a^2+b^2\alpha+c^2\beta+d^2\alpha\beta$,
which yields
\[1+(ba^{-1})^2\alpha+(ca^{-1})^2\beta+((d+1)a^{-1})^2\alpha\beta=0.\]
Therefore, the form $\lla\alpha,\beta\rra$ is isotropic.
Set $y'=y+\alpha^{-1}ax\in\alt(A,\sigma)^+$.
By (\ref{eq2}) we have $xy'=xy+a=bx+cy+dz\in\alt(A,\sigma)^+$, hence $\{x,y'\}$ is a set of alternating generators of $\Phi(A,\sigma)$.
As $x^2=\alpha$ and ${y'}^2=\beta+\alpha^{-1}a^2$, we obtain $\mathfrak{Pf}(A,\sigma)\simeq\lla\alpha,\beta+\alpha^{-1}a^2\rra$.
Thus, $\mathfrak{Pf}(A,\sigma)\simeq\lla\alpha,\beta\rra$ by (\ref{repl}).
\end{proof}

Using (\ref{pfq}) and (\ref{bas} (4)), we obtain the following relation between the Pfister invariant and the quadratic form $q_\sigma^+$.
\begin{prop}\label{pfister}
Let $(A,\sigma)$ and $(A',\sigma')$ be decomposable biquaternion algebras with orthogonal involution over $F$.
Then $q^+_\sigma\simeq q^+_{\sigma'}$ if and only if $\mathfrak{Pf}(A,\sigma)\simeq\mathfrak{Pf}(A',\sigma')$.
\end{prop}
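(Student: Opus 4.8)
The statement to be proved is Proposition \ref{pfister}: for decomposable biquaternion algebras with orthogonal involution $(A,\sigma)$ and $(A',\sigma')$ over $F$ (characteristic $2$), one has $q^+_\sigma\simeq q^+_{\sigma'}$ if and only if $\mathfrak{Pf}(A,\sigma)\simeq\mathfrak{Pf}(A',\sigma')$. The plan is to reduce everything to the explicit diagonal shapes already computed in Lemma \ref{bas} (4) and Lemma \ref{pfq}. The main tool is the classical fact (e.g.\ \cite[(4.15)]{elman} or similar) that a bilinear $2$-fold Pfister form $\lla\alpha,\beta\rra$ is determined by its pure part $\langle\alpha,\beta,\alpha\beta\rangle$ up to isometry, together with the converse observation that the pure part of $\lla\alpha,\beta\rra$ is $\langle\alpha,\beta,\alpha\beta\rangle$; in other words, over a field of characteristic $2$, $\lla\alpha,\beta\rra\simeq\lla\alpha',\beta'\rra$ if and only if $\langle\alpha,\beta,\alpha\beta\rangle_q\simeq\langle\alpha',\beta',\alpha'\beta'\rangle_q$.

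First I would fix a decomposition $(A,\sigma)\simeq(Q_1,\sigma_1)\otimes(Q_2,\sigma_2)$ and pick $\alpha\in F^\times$, $\beta\in F^\times$ representing $\disc\sigma_1$ and $\disc\sigma_2$. By Lemma \ref{bas} (4) we have $q^+_\sigma\simeq\langle\alpha,\beta,\alpha\beta\rangle_q$, and by the definition of the Pfister invariant $\mathfrak{Pf}(A,\sigma)\simeq\lla\alpha,\beta\rra$ (this being independent of the decomposition by \cite[(7.5)]{dolphin3}). Applying Lemma \ref{pfq} in the direction ``$\mathfrak{Pf}(A,\sigma)\simeq\lla\alpha,\beta\rra$ implies $q^+_\sigma\simeq\langle\alpha,\beta,\alpha\beta\rangle_q$'' and its converse gives a precise equivalence: $q^+_\sigma\simeq\langle\alpha,\beta,\alpha\beta\rangle_q$ if and only if $\mathfrak{Pf}(A,\sigma)\simeq\lla\alpha,\beta\rra$, for any $\alpha,\beta\in F^\times$. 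Do the same for $(A',\sigma')$, obtaining representatives $\alpha',\beta'$ with $q^+_{\sigma'}\simeq\langle\alpha',\beta',\alpha'\beta'\rangle_q$ and $\mathfrak{Pf}(A',\sigma')\simeq\lla\alpha',\beta'\rra$.

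Now the argument closes by a short chain. Suppose $q^+_\sigma\simeq q^+_{\sigma'}$. Then $\langle\alpha,\beta,\alpha\beta\rangle_q\simeq\langle\alpha',\beta',\alpha'\beta'\rangle_q$, so $q^+_{\sigma'}\simeq\langle\alpha,\beta,\alpha\beta\rangle_q$, and Lemma \ref{pfq} (applied to $(A',\sigma')$) yields $\mathfrak{Pf}(A',\sigma')\simeq\lla\alpha,\beta\rra\simeq\mathfrak{Pf}(A,\sigma)$. Conversely, suppose $\mathfrak{Pf}(A,\sigma)\simeq\mathfrak{Pf}(A',\sigma')$, i.e.\ $\lla\alpha,\beta\rra\simeq\lla\alpha',\beta'\rra$. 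Since the pure part of a bilinear $2$-fold Pfister form is recovered from it as $\langle\alpha,\beta,\alpha\beta\rangle_q$ (and this pure part is an invariant of the form up to isometry), we get $\langle\alpha,\beta,\alpha\beta\rangle_q\simeq\langle\alpha',\beta',\alpha'\beta'\rangle_q$, hence $q^+_\sigma\simeq q^+_{\sigma'}$.

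The only delicate point is the passage between a bilinear Pfister form and its pure part in characteristic $2$: unlike the quadratic case there is no ``Witt cancellation'' for symmetric bilinear forms, so one cannot naively cancel the common slot $\langle 1\rangle$ from $\langle 1\rangle\perp\langle\alpha,\beta,\alpha\beta\rangle$ versus $\langle 1\rangle\perp\langle\alpha',\beta',\alpha'\beta'\rangle$. I expect to quote the relevant statement from \cite{elman} (the theory of bilinear Pfister forms in characteristic $2$: a bilinear Pfister form and its pure subform determine each other up to isometry), or else give the argument directly via the round property of Pfister forms and the identities in \cite[(4.15)]{elman} already invoked in the proof of Lemma \ref{repl}. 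Once that fact is in hand, the proposition is immediate from Lemmas \ref{bas} (4) and \ref{pfq} as above; no further computation is needed.
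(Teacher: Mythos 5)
Your proof is correct and follows the paper's route: the paper's entire proof is ``combine Lemma \ref{pfq} with Lemma \ref{bas}~(4)'', which is exactly your reduction. The one place you diverge is the converse direction, where you invoke the fact that isometric bilinear Pfister forms have isometric pure subforms (the ``delicate point'' you flag). That fact is true and is even used elsewhere in the paper (Lemma \ref{lem} cites Arason--Baeza for it), so your argument goes through; but it is not needed here. Since Lemma \ref{pfq} is an equivalence valid for \emph{any} pair $\alpha,\beta\in F^\times$, you can simply take $\lla\alpha,\beta\rra$ representing $\mathfrak{Pf}(A,\sigma)$, note that $\mathfrak{Pf}(A',\sigma')\simeq\lla\alpha,\beta\rra$ by hypothesis, and apply Lemma \ref{pfq} to $(A',\sigma')$ with this same pair to get $q^+_{\sigma'}\simeq\langle\alpha,\beta,\alpha\beta\rangle_q\simeq q^+_\sigma$ directly --- no cancellation or pure-subform theorem required, which is precisely why the paper can dispense with a written proof.
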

The following result is analogous to (\ref{charn}).
\begin{thm}\label{char}
Let $(A,\sigma)$ and $(A',\sigma')$ be decomposable biquaternion algebras with orthogonal involution over $F$.
Then the following statements are equivalent.
\begin{itemize}
\item[$(1)$] $(A,\sigma)\simeq(A',\sigma')$.
\item[$(2)$] $q_\sigma\simeq q_{\sigma'}$ and $q^+_\sigma\simeq q^+_{\sigma'}$.
\item[$(3)$] $A\simeq A'$ and $q^+_\sigma\simeq q^+_{\sigma'}$.
\item[$(4)$] $A\simeq A'$ and $\mathfrak{Pf}(A,\sigma)\simeq\mathfrak{Pf}(A',\sigma')$.
\end{itemize}
\end{thm}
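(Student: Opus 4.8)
The plan is to establish the cycle $(1)\Rightarrow(2)\Rightarrow(3)\Rightarrow(4)\Rightarrow(1)$; the first three implications are short, and all the substance is in $(4)\Rightarrow(1)$. For $(1)\Rightarrow(2)$ I would feed an isomorphism $(A,\sigma)\simeq(A',\sigma')$ into (\ref{jay}): since $\car F=2$ we have $-1=1$, so $(-1)\cdot q_{\sigma'}=q_{\sigma'}$, and $\alt(A',\sigma')^+=\alt(A',\sigma')^-$, so $q^-_{\sigma'}=q^+_{\sigma'}$; thus both alternatives in (\ref{jay}) collapse to $q_\sigma\simeq q_{\sigma'}$ and $q^+_\sigma\simeq q^+_{\sigma'}$, which is $(2)$. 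For $(2)\Rightarrow(3)$: $q_\sigma$ and $q_{\sigma'}$ are Albert forms of $A$ and $A'$ (as noted after (\ref{not})), and isometric forms are a fortiori similar, so $A\simeq A'$ by \cite[(16.3)]{knus}; together with $q^+_\sigma\simeq q^+_{\sigma'}$ this gives $(3)$. Finally $(3)\Leftrightarrow(4)$ is immediate from (\ref{pfister}), which identifies the conditions $q^+_\sigma\simeq q^+_{\sigma'}$ and $\mathfrak{Pf}(A,\sigma)\simeq\mathfrak{Pf}(A',\sigma')$ without reference to the underlying algebras; in particular $(3)\Rightarrow(4)$.

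For $(4)\Rightarrow(1)$ I would fix an $F$-algebra isomorphism $A\simeq A'$ and use it to identify $A=A'$, reducing to two decomposable orthogonal involutions $\sigma,\sigma'$ on a single biquaternion algebra $A$ with $\mathfrak{Pf}(A,\sigma)\simeq\mathfrak{Pf}(A,\sigma')=:\lla\alpha_1,\alpha_2\rra$. By the characterisation of the Pfister invariant through alternating generators (recalled after its definition), $\Phi(A,\sigma)$ admits a set of alternating generators $\{u_1,u_2\}$ with $u_i^2=\alpha_i$, and $\Phi(A,\sigma')$ admits a set $\{u_1',u_2'\}$ with $u_i'^2=\alpha_i$. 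From $\{u_1,u_2\}$ I would build a decomposition $(A,\sigma)\simeq(Q_1,\sigma_1)\otimes(Q_2,\sigma_2)$ with $u_i\in Q_i$ and $\disc\sigma_i=\alpha_iF^{\times2}$ (so that $\alt(Q_i,\sigma_i)=Fu_i$), by completing each $u_i$ to a quaternion generating pair inside the centraliser of the other generator; likewise one obtains $(A,\sigma')\simeq(Q_1',\sigma_1')\otimes(Q_2',\sigma_2')$ with $u_i'\in Q_i'$ and $\disc\sigma_i'=\alpha_iF^{\times2}$. Since an orthogonal involution on a quaternion algebra in characteristic $2$ is determined up to conjugation by its discriminant, it then suffices to arrange $Q_i\simeq Q_i'$ as $F$-algebras, for then $(Q_i,\sigma_i)\simeq(Q_i',\sigma_i')$ and hence $(A,\sigma)\simeq(A',\sigma')$.

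The main obstacle is exactly this last step: a biquaternion algebra generally has inequivalent decompositions, so $Q_i\simeq Q_i'$ is not automatic from $\mathfrak{Pf}$ alone. I expect to force it by a common-slot manoeuvre for $\lla\alpha_1,\alpha_2\rra$, replacing the complementary generators and, where necessary, passing between the presentations $\lla\alpha_1,\alpha_2\rra\simeq\lla\alpha_1,\alpha_1\alpha_2\rra\simeq\lla\alpha_1\alpha_2,\alpha_2\rra$, so as to match the Brauer classes of the quaternion factors; these classes are constrained only by $[Q_1][Q_2]=[A]$ and by $\alpha_i$ being a slot of $Q_i$, both data depending on $[A]$ and $\lla\alpha_1,\alpha_2\rra$ only, hence shared by $\sigma$ and $\sigma'$. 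The case where $\lla\alpha_1,\alpha_2\rra$ is isotropic — one slot may then be taken to be a square and the corresponding factor is split — is degenerate and should be treated separately by a direct argument. Once $Q_i\simeq Q_i'$ is secured, the conclusion $(A,\sigma)\simeq(A',\sigma')$ follows as above, closing the cycle.
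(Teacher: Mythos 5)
Your handling of $(1)\Rightarrow(2)$, $(2)\Rightarrow(3)$ and $(3)\Leftrightarrow(4)$ is correct and coincides with the paper's proof: (\ref{jay}) together with the characteristic-two collapse $-1=1$ and $\alt(A',\sigma')^+=\alt(A',\sigma')^-$ gives $(1)\Rightarrow(2)$; the Albert-form property of $q_\sigma$ and \cite[(16.3)]{knus} give $(2)\Rightarrow(3)$; and (\ref{pfister}) gives $(3)\Leftrightarrow(4)$.

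The problem is $(4)\Rightarrow(1)$, and it sits exactly where you locate it. The paper does not reprove this implication; it quotes \cite[(6.5)]{mahmoudi2}, which is precisely the statement that totally decomposable orthogonal involutions on isomorphic algebras with isometric Pfister invariants are isomorphic. Your attempt to rederive it stalls at the step ``arrange $Q_i\simeq Q_i'$'', and the justification you offer --- that the Brauer classes of the factors are ``constrained only by $[Q_1][Q_2]=[A]$ and by $\alpha_i$ being a slot of $Q_i$, both data depending on $[A]$ and $\lla\alpha_1,\alpha_2\rra$ only, hence shared by $\sigma$ and $\sigma'$'' --- does not do the job. The condition that $\alpha_i$ occur as a slot of $Q_i$ (equivalently, that $\alt(Q_i,\sigma_i)=Fu_i$ with $u_i^2=\alpha_i$) only says that $Q_i$ contains, hence is split by, the purely inseparable extension $F(\alpha_i^{1/2})$, and the relative Brauer group of that extension is in general large; together with $[Q_1][Q_2]=[A]$ this leaves many admissible pairs $([Q_1],[Q_2])$. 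Already for split $A$ one may take $Q_1\simeq Q_2$ to be any quaternion algebra split by $F(\alpha_1^{1/2})$. More to the point, even if the constraints are the same for $\sigma$ and $\sigma'$, nothing in your argument shows that the set of pairs $([Q_1],[Q_2])$ actually realised by decompositions of a fixed $(A,\sigma)$ exhausts all pairs meeting those constraints; without that, there is no reason a decomposition of $(A,\sigma')$ should realise the same pair as your chosen decomposition of $(A,\sigma)$. The ``common-slot manoeuvre'' you gesture at is essentially the content of the cited theorem, and you only say you ``expect'' it to work, so as written the implication is not proved. (Your subsidiary claims are fine: orthogonal involutions on a quaternion algebra in characteristic $2$ are indeed classified by their discriminant, and matching the factors would indeed finish the proof.) The quickest repair is to do what the paper does and invoke \cite[(6.5)]{mahmoudi2}; otherwise you must supply a genuine chain argument connecting any two admissible decompositions.
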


\begin{proof}
The implications
$(1)\Rightarrow (2)$ follows from (\ref{jay}).

$(2)\Rightarrow (3)$:
Since $q_\sigma$ and $q_{\sigma'}$ are Albert forms of $(A,\sigma)$ and $(A',\sigma')$ respectively,
$q_\sigma\simeq q_{\sigma'}$ implies that $A\simeq A'$ by \cite[(16.3)]{knus}.

The implications $(3)\Rightarrow (4)$ and $(4)\Rightarrow (1)$ follow from (\ref{pfister}) and \cite[(6.5)]{mahmoudi2} respectively.
\end{proof}

\begin{lem}\label{lem}
If $\lla\alpha,\beta\rra$ is an anisotropic bilinear Pfister form over $F$, then $\lla\alpha,\beta\rra\not\simeq\lla\alpha+1,\beta\rra$.
\end{lem}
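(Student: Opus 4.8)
The plan is to argue by contradiction, assuming that $\lla\alpha,\beta\rra$ is anisotropic but $\lla\alpha,\beta\rra\simeq\lla\alpha+1,\beta\rra$, and to extract a nontrivial zero of $\lla\alpha,\beta\rra$. First I would recall the standard presentation of a bilinear $2$-fold Pfister form: $\lla\alpha,\beta\rra\simeq\langle1\rangle\perp\langle\alpha,\beta,\alpha\beta\rangle$, so the pure part $\lla\alpha,\beta\rra'\simeq\langle\alpha,\beta,\alpha\beta\rangle$ is the thing that carries all the information, and $\lla\alpha,\beta\rra$ represents exactly $F^{\times2}\cup(F^{\times2}\cdot D_F(\lla\alpha,\beta\rra'))$. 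The assumed isometry $\langle1,\alpha,\beta,\alpha\beta\rangle\simeq\langle1,\alpha+1,\beta,(\alpha+1)\beta\rangle$ forces, after splitting off the common $\langle1\rangle$ summand using Witt cancellation for bilinear forms (valid here since these forms are anisotropic, hence nondegenerate), an isometry of the pure parts $\langle\alpha,\beta,\alpha\beta\rangle\simeq\langle\alpha+1,\beta,(\alpha+1)\beta\rangle$. In particular $\alpha+1\in D_F\langle\alpha,\beta,\alpha\beta\rangle$.

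Now the key point: the value $\alpha+1$ is represented by $\lla\alpha,\beta\rra$ itself, since $\lla\alpha,\beta\rra = \langle 1,\alpha\rangle\perp\beta\langle1,\alpha\rangle$ clearly represents $1+\alpha$ via the first slot $\langle1,\alpha\rangle$ (taking both coordinates equal to $1$). So $\alpha+1\in D_F(\lla\alpha,\beta\rra)$, and by the round (multiplicative) property of bilinear Pfister forms — see \cite[(6.1) or Chap.~VI]{elman} — if $\alpha+1\neq0$ then $(\alpha+1)\lla\alpha,\beta\rra\simeq\lla\alpha,\beta\rra$. Combining $(\alpha+1)\cdot\langle\alpha,\beta,\alpha\beta\rangle\simeq\langle\alpha+1,(\alpha+1)\beta,(\alpha+1)^2\alpha\beta\rangle\simeq\langle\alpha+1,(\alpha+1)\beta,\alpha\beta\rangle$ with the isometry of pure parts derived above, and cancelling the common summand $\langle\alpha+1,(\alpha+1)\beta\rangle$ (again by Witt cancellation), one is left with $\alpha\beta$-type identities forcing $\alpha\in D_F\langle\alpha+1,(\alpha+1)\beta\rangle$ or a similar representation condition; chasing this through yields $1\in D_F\langle\alpha,\beta,\alpha\beta\rangle$, i.e. $\lla\alpha,\beta\rra$ is isotropic, contradicting anisotropy. (If $\alpha+1=0$, then $\alpha=1$ in characteristic $2$ and $\lla\alpha,\beta\rra=\lla1,\beta\rra$ is isotropic, again a contradiction.)

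Alternatively, and perhaps more cleanly, I would invoke Lemma (\ref{repl}): if $\lla\alpha,\beta\rra$ were isometric to $\lla\alpha+1,\beta\rra$ one expects the two forms to have, among other things, the same value set, and the presence of the "shift" $\alpha\mapsto\alpha+1$ (which is $\alpha+\alpha^0\cdot 1^2$, not of the form $\alpha$ times a square) is exactly what an anisotropic form cannot absorb — Lemma (\ref{repl}) shows that for isotropic forms the slot $\beta$ can be shifted by $\alpha^{-1}\lambda^2$, and the symmetric statement would let one shift the slot $\alpha$ only by squares, never by $1$, unless the form degenerates. The cleanest route is: from $\lla\alpha,\beta\rra\simeq\lla\alpha+1,\beta\rra$ and roundness, deduce that $\lla\alpha,\beta\rra$ represents $(\alpha+1)\cdot\alpha=\alpha^2+\alpha$ in its pure part; but $\alpha^2+\alpha=\alpha^2(1+\alpha^{-1})$, so $\lla\alpha,\beta\rra'$ represents $1+\alpha^{-1}$ up to a square, while it already represents $\alpha$; putting $\alpha+(1+\alpha)=1$ together via the subform $\langle\alpha\rangle\perp\langle\alpha\cdot\alpha^{-1}\cdot\text{(the }1+\alpha^{-1}\text{ direction)}\rangle$ one produces an isotropic vector. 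The main obstacle I anticipate is bookkeeping the characteristic-$2$ bilinear-form manipulations correctly — in particular justifying Witt cancellation at each step (it holds because every form in sight is anisotropic, hence nondegenerate) and applying the roundness/similarity identities \cite[(4.15), (6.1)]{elman} with the right scaling factors; the conceptual content, that adding $1$ to a slot of an anisotropic Pfister form changes it, is forced the moment one knows the form is round and represents $\alpha+1$.
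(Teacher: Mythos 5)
Your overall strategy is the paper's: reduce to showing that the pure subform $\langle\alpha,\beta,\alpha\beta\rangle$ of an anisotropic $\lla\alpha,\beta\rra$ cannot represent $\alpha+1$. But there are two genuine gaps. First, the step where you ``split off the common $\langle1\rangle$ summand using Witt cancellation'' invokes a false principle: Witt cancellation for symmetric bilinear forms fails in characteristic $2$ even for nondegenerate forms (e.g.\ $\langle1,1,1\rangle\simeq\langle1\rangle\perp\mathbb{H}$, so $\langle1\rangle\perp\langle1,1\rangle\simeq\langle1\rangle\perp\mathbb{H}$ while $\langle1,1\rangle\not\simeq\mathbb{H}$, the latter being alternating), and ``anisotropic, hence nondegenerate'' does not rescue it for the reason you give. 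The conclusion you want is true, but it needs either the Arason--Baeza theorem the paper cites (two bilinear Pfister forms are isometric if and only if their pure parts are), or the observation that the two pure parts are Witt-equivalent and both anisotropic (being subforms of the anisotropic form $\lla\alpha,\beta\rra$ and of $\lla\alpha+1,\beta\rra\simeq\lla\alpha,\beta\rra$), hence isometric by uniqueness of the anisotropic part in the Witt decomposition. Either way, the justification must be repaired.

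Second, and more seriously, once you have $\alpha+1\in D_F\langle\alpha,\beta,\alpha\beta\rangle$ the proof is one line, but you never write it: from $\alpha+1=a^2\alpha+b^2\beta+c^2\alpha\beta$ one gets $1+(a+1)^2\alpha+b^2\beta+c^2\alpha\beta=0$, an isotropic vector of $\lla\alpha,\beta\rra$ that is nonzero because its first coordinate is $1$. Instead you retreat to the weaker, trivially true fact that the full form $\lla\alpha,\beta\rra$ represents $\alpha+1$ (it always does, via the vector $(1,1,0,0)$, with no hypothesis needed), and launch a roundness argument whose conclusion is reached only by ``chasing this through'' and ``a similar representation condition.'' Since roundness at $\alpha+1$ holds for every bilinear Pfister form $\lla\alpha,\beta\rra$ unconditionally, that branch cannot by itself produce a contradiction, and the promised combination with the pure-part isometry is never actually carried out. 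As written, the argument does not close; the fix is simply to perform the displayed substitution above at the end of your first paragraph and delete the rest.
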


\begin{proof}
As proved in \cite[p. 16]{arason}, two bilinear Pfister forms are isometric if and only if their pure subforms are isometric.
Thus, it is enough to show that the pure subform of $\lla\alpha,\beta\rra$ does not represent $\alpha+1$.
If $\alpha+1\in D_F(\langle\alpha,\beta,\alpha\beta\rangle)$, then there exist $a,b,c\in F$ such that
$a^2\alpha+b^2\beta+c^2\alpha\beta=\alpha+1$.
Thus, $1+(a+1)^2\alpha+b^2\beta+c^2\alpha\beta=0$, i.e., $\lla\alpha,\beta\rra$ is isotropic which contradicts the assumption.
\end{proof}

\begin{defi}
For $\alpha\in F^\times$, define an involution $T_\alpha:M_2(F)\rightarrow M_2(F)$ via
{\setlength\arraycolsep{2pt}
\begin{eqnarray*}
T_\alpha\left(\begin{array}{cc}a & b \\c & d\end{array}\right)=\left(\begin{array}{cc}a & c\alpha^{-1} \\b\alpha & d\end{array}\right).
\end{eqnarray*}}
\end{defi}
\noindent Note that $T_\alpha$ is of orthogonal type and $\disc T_\alpha=\alpha F^{\times2}\in F^\times/F^{\times2}$.

The following example shows that if $\car F=2$, the conditions $A\simeq_F A'$ and $Q^+\simeq_F {Q'}^+$ don't necessarily imply that $(A,\sigma)\simeq(A',\sigma')$ (compare (\ref{charn})).
\begin{ex}
Let $\lla\alpha,\beta\rra$ be an anisotropic Pfister form over a field $F$ of characteristic $2$ and let $A=M_4(F)$.
Consider the involutions $\sigma=T_\alpha\otimes T_\beta$ and $\sigma'=T_{\alpha+1}\otimes T_\beta$ on $A$.
Then $\mathfrak{Pf}(A,\sigma)\simeq\lla\alpha,\beta\rra$ and $\mathfrak{Pf}(A,\sigma')\simeq\lla\alpha+1,\beta\rra$, hence $\mathfrak{Pf}(A,\sigma)\not\simeq\mathfrak{Pf}(A,\sigma')$ by (\ref{lem}).
Using (\ref{char}), we obtain $(A,\sigma)\not\simeq(A,\sigma')$.

On the other hand there exists a set of alternating generators $\{u,v\}$ (resp. $\{u',v'\}$) of $\Phi(A,\sigma)$ (resp. $\Phi(A,\sigma')$) such that $u^2=\alpha$ and $v^2=\beta$ (resp. ${u'}^2=\alpha+1$ and ${v'}^2=\beta$).
Then $\Phi(A,\sigma)\simeq F[u,v]$ and $\Phi(A,\sigma')\simeq F[u',v']$.
The linear map $f:F[u,v]\rightarrow F[u',v']$ induced by $f(1)=1$, $f(u)=u'+1$, $f(v)=v'$ and $f(uv)=(u'+1)v'$ is an $F$-algebra isomorphism.
Thus, $\Phi(A,\sigma)\simeq\Phi(A,\sigma')$, which implies that $Q(A,\sigma)^+\simeq Q(A,\sigma')^+$ by (\ref{phi}).
\end{ex}

\section{Metabolic involutions}\label{sec-met}
Let $(A,\sigma)$ be an algebra with involution over a field $F$ of arbitrary characteristic.
An idempotent $e\in A$ is called {\it hyperbolic} (resp. {\it metabolic}) with respect to $\sigma$ if $\sigma(e)=1-e$ (resp. $\sigma(e)e=0$ and $(1-e)(1-\sigma(e))=0$).
The pair $(A,\sigma)$ is called {\it hyperbolic} (resp. {\it metabolic}) if $A$ contains a hyperbolic (resp. metabolic) idempotent with respect to $\sigma$.
Every hyperbolic involution $\sigma$ is metabolic but the converse is not always true.
If $\sigma$ is symplectic or $\car F\neq2$, the involution $\sigma$ is metabolic if and only if it is hyperbolic, (see \cite[(4.10)]{dolphin} and \cite[(A.3)]{fluckiger}).
\begin{lem}\label{metab}
Let $(A,\sigma)$ be a central simple algebra with orthogonal involution over a field $F$.
If $e\in A$ is a metabolic idempotent, then $(e-\sigma(e))^2=1$.
\end{lem}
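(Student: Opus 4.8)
The plan is to reduce the statement to a one-line computation in $A$, carried out after introducing the abbreviation $f=\sigma(e)$. The first step is to observe that $f$ is again an idempotent: since $\sigma$ is an anti\-automorphism of order two and $e^2=e$, we have $f^2=\sigma(e)\sigma(e)=\sigma(e^2)=\sigma(e)=f$. This is the only place where the fact that $\sigma$ reverses products is (trivially) invoked, and it costs nothing because a single element is involved.

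Next I would unwind the two defining relations of a metabolic idempotent. The relation $\sigma(e)e=0$ is simply $fe=0$. The relation $(1-e)(1-\sigma(e))=0$ expands to $1-e-f+ef=0$, which I would rewrite in the form $ef=e+f-1$. These two identities, together with $e^2=e$ and $f^2=f$, are all that is needed.

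The final step is the direct expansion
\[
(e-\sigma(e))^2=(e-f)^2=e^2-ef-fe+f^2=e-(e+f-1)-0+f=1,
\]
substituting $e^2=e$, $ef=e+f-1$, $fe=0$, $f^2=f$ in that order. I do not anticipate any real obstacle: the conclusion is a purely formal consequence of the idempotency of $e$ and of $\sigma(e)$ together with the two metabolicity relations, and the orthogonality hypothesis on $\sigma$ is in fact not used in the argument — it only fixes the context in which the lemma will later be applied. The sole point deserving a second glance is the verification that $\sigma(e)$ is idempotent, which I have isolated as the first step above.
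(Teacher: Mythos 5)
Your proof is correct and is exactly the computation the paper intends: the paper's proof simply says the identity "follows from the relations $(1-e)(1-\sigma(e))=0$ and $\sigma(e)e=0$", and your expansion $(e-f)^2=e^2-ef-fe+f^2=e-(e+f-1)-0+f=1$ with $f=\sigma(e)$ spells out that one-line argument in full. The only extra ingredient you supply, the idempotency of $\sigma(e)$, is indeed needed for the term $f^2=f$ and is verified correctly.
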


\begin{proof}
This follows from the relations $(1-e)(1-\sigma(e))=0$ and $\sigma(e)e=0$.
\end{proof}

\begin{thm}\label{hyp}
Let $(A,\sigma)$ be a decomposable biquaternion algebra with ortho\-gonal involution over a field $F$.
The following statements are equivalent.
\begin{itemize}
\item[$(1)$] $(A,\sigma)$ is metabolic.
\item[$(2)$] $Q^+$ or $Q^-$ splits.
\item[$(3)$] $1\in D_F(q_\sigma^+)$ or $-1\in D_F(q_\sigma^-)$.
\item[$(4)$] $q_\sigma^+$ or $q_\sigma^-$ is isotropic.
\end{itemize}
\end{thm}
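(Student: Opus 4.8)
The plan is to prove the cycle of implications $(1)\Rightarrow(2)\Rightarrow(3)\Rightarrow(4)\Rightarrow(1)$, using the structure theory of $Q^+$ and $Q^-$ developed in \S\ref{sec-pfaffian} together with the norm-form computations in Lemma~\ref{bas}. First I would treat $(2)\Leftrightarrow(3)$, which should be the most routine: for a quaternion algebra $Q$ with canonical involution, $Q$ splits if and only if its norm form $N_Q$ is isotropic, and by Lemma~\ref{bas}(3) (in characteristic $\neq 2$) we have $N_{Q^+}\simeq\langle1\rangle_q\perp(-1)\cdot q_\sigma^+$, so $N_{Q^+}$ is isotropic exactly when $q_\sigma^+$ represents $1$; similarly $N_{Q^-}\simeq\langle1\rangle_q\perp q_\sigma^-$ gives that $N_{Q^-}$ is isotropic exactly when $q_\sigma^-$ represents $-1$. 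In characteristic $2$ one must argue differently since $Q^+=Q^-$ is not a quaternion subalgebra; here I would instead read ``$Q^+$ splits'' as ``$\Phi(A,\sigma)$ sits inside a split factor,'' and relate isotropy of $q_\sigma^+$ to splitting via the Pfister invariant, using (\ref{pfq}) and the fact that a bilinear Pfister form $\lla\alpha,\beta\rra$ is isotropic iff the associated quaternion algebra $(\alpha,\beta)_F$ splits. The implication $(3)\Rightarrow(4)$ is immediate since representing $1$ (resp.\ $-1$) forces the relevant restricted form to be isotropic (one should note $q_\sigma^\pm$ is $3$-dimensional and that an anisotropic form cannot represent a value at a vector forced to vanish—more precisely, if $q_\sigma^+$ represents $1$ at a vector $v$ and $q_\sigma^+$ is anisotropic, one extends to a hyperbolic-type contradiction; actually the cleaner route is $(4)\Rightarrow(1)\Rightarrow(2)\Rightarrow(3)\Rightarrow(4)$ so that $(3)\Rightarrow(4)$ need not be proved directly).

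For $(1)\Rightarrow(2)$: suppose $e\in A$ is a metabolic idempotent. By Lemma~\ref{metab}, the element $t:=e-\sigma(e)$ lies in $\alt(A,\sigma)$ and satisfies $t^2=1\in F$. By Proposition~\ref{x2}, $t\in\alt(A,\sigma)^+\cup\alt(A,\sigma)^-$, so $t$ is a unit lying in $Q^+$ or in $Q^-$. In the first case, by Lemma~\ref{bas}(1) extend $t$ to a basis $(t,v,w)$ of $\alt(A,\sigma)^+$ with $w=tv$; then $\nrd_A(t)=q_\sigma(t)^2=(t\,p_\sigma(t))^2=(t\cdot t)^2=1$, so $q_\sigma^+$ represents $q_\sigma(t)=\pm1$, and after rescaling the basis vector we see $q_\sigma^+$ represents a square, hence $1\in D_F(q_\sigma^+)$, whence $Q^+$ splits by the $(2)\Leftrightarrow(3)$ equivalence already established. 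The case $t\in\alt(A,\sigma)^-$ is symmetric and yields $-1\in D_F(q_\sigma^-)$ and $Q^-$ split; here one must be careful about the sign bookkeeping coming from the two different norm-form formulas in Lemma~\ref{bas}(3), which is where characteristic-$2$ versus characteristic-$\neq 2$ arguments have to be separated.

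The main obstacle is $(4)\Rightarrow(1)$, the construction of a metabolic idempotent from an isotropic vector. Suppose $q_\sigma^+$ is isotropic, so there is a nonzero $x\in\alt(A,\sigma)^+$ with $q_\sigma(x)=0$; then $\nrd_A(x)=q_\sigma(x)^2=0$, so $x$ is a non-invertible element of the quaternion subalgebra $Q^+$ (in characteristic $\neq2$; in characteristic $2$ one works inside $\Phi(A,\sigma)=Q^+$). Since $x\in\alt(A,\sigma)^+$ we have $p_\sigma(x)=x$, and using Lemma~\ref{iso}, $x^2=q_\sigma(x)=0$ actually need not hold—we only get $xp_\sigma(x)=0$, i.e.\ $x^2=0$, so $x$ is a square-zero element. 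I would then produce an explicit idempotent: pick $y\in\alt(A,\sigma)$ with $\mathfrak{b}_{q_\sigma}(x,y)=1$ (possible since $q_\sigma$ is a nondegenerate $6$-dimensional form and $x\neq 0$), normalize so that $xy x = x$ using $x^2=0$ and Lemma~\ref{iso}, and set $e:=xy$ or a suitable variant; one checks $e^2=e$ and then verifies the metabolic conditions $\sigma(e)e=0$ and $(1-e)(1-\sigma(e))=0$ by computing $\sigma(e)=\sigma(y)\sigma(x)$ and using $\sigma(x)=-x$ (as $x\in\alt$) together with $x^2=0$. The delicate point is arranging the normalization of $y$ so that $e$ is genuinely idempotent and $\sigma$-compatible; this is essentially the standard ``hyperbolic plane from an isotropic vector'' argument in the algebra-with-involution setting, and in characteristic $2$ one must check that the metabolic (not hyperbolic) conditions are the right ones, invoking the remark after Lemma~\ref{metab} that metabolic need not imply hyperbolic. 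The symmetric case, $q_\sigma^-$ isotropic, is handled identically with the roles of $Q^+$ and $Q^-$ interchanged.
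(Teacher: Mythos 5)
Your skeleton overlaps with the paper's in several places (the $(2)\Leftrightarrow(3)\Leftrightarrow(4)$ equivalences via Lemma~\ref{bas}(3) in characteristic $\neq 2$, and the use of Lemma~\ref{metab} together with Proposition~\ref{x2} to produce a square-one alternating element from a metabolic idempotent), but there are two genuine gaps. First, your claim that $(3)\Rightarrow(4)$ is ``immediate'' is false: a quadratic form representing $1$ need not be isotropic (e.g.\ $\langle 1,1,1\rangle_q$ over $\mathbb{R}$), and your fallback cycle $(4)\Rightarrow(1)\Rightarrow(2)\Rightarrow(3)\Rightarrow(4)$ still contains the arrow $(3)\Rightarrow(4)$, so it cannot be skipped. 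The paper proves it in characteristic $2$ by an explicit computation: extend $u$ with $q_\sigma^+(u)=u^2=1$ to a basis $(u,v,uv)$ of $\alt(A,\sigma)^+$ and use commutativity of $Q^+$ to get $q_\sigma^+(v+uv)=v^2+u^2v^2=0$. In characteristic $\neq2$ it follows by routing through $(2)$ using the norm forms, since $1\in D_F(q_\sigma^+)$ makes $N_{Q^+}\simeq\langle1\rangle_q\perp(-1)\cdot q_\sigma^+$ isotropic, hence hyperbolic, hence $q_\sigma^+$ isotropic.

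Second, and more seriously, your $(4)\Rightarrow(1)$ construction of a metabolic idempotent is incomplete precisely where it matters. In characteristic $\neq2$ it does work: choosing $y\in\alt(A,\sigma)^+$ with $xy+yx=1$ (possible since the polar form of $q_\sigma^+$ is nondegenerate there), the element $e=xy$ satisfies $e^2=xy-x^2y^2=e$ and $\sigma(e)=yx=1-e$, so $e$ is even hyperbolic. But in characteristic $2$ the set $\alt(A,\sigma)^+$ is commutative, so no such $y$ exists inside it; taking $y\in\alt(A,\sigma)$ arbitrary with $\mathfrak{b}_{q_\sigma}(x,y)=1$, one still gets $e^2=e$ and $\sigma(e)e=yx^2y=0$, but the second metabolic condition $(1-e)(1-\sigma(e))=0$ reduces to $1+xy+yx+xy^2x=0$, which is not implied by anything you have arranged ($\mathfrak{b}_{q_\sigma}(x,y)=xp_\sigma(y)+yx$, not $xy+yx$). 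This is exactly the point the paper avoids: it proves $(4)\Rightarrow(2)$ trivially (a nonzero $x\in\alt(A,\sigma)^+\subseteq Q^+$ with $x^2=0$ shows $Q^+$ splits) and then gets $(2)\Rightarrow(1)$ by citing Haile--Morandi in characteristic $\neq2$ and \cite[(6.6)]{mahmoudi2} in characteristic $2$. Relatedly, your characteristic-$2$ justification of $(2)\Leftrightarrow(4)$ via ``$\lla\alpha,\beta\rra$ isotropic iff the quaternion algebra $(\alpha,\beta)_F$ splits'' is not the right statement in characteristic $2$, where bilinear Pfister forms are not norm forms of quaternion algebras; the correct link is that $\lla\alpha,\beta\rra$ is isotropic iff $1,\alpha,\beta,\alpha\beta$ are $F^2$-dependent iff $Q^+=F[u,v]$ contains a nonzero nilpotent.
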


\begin{proof}
If $\car F\neq2$, by (\ref{q} (1)) we have $(A,\sigma)\simeq(Q^+,\sigma|_{Q^+})\otimes(Q^-,\sigma|_{Q^-})$, where $\sigma|_{Q^+}$ and $\sigma|_{Q^-}$ are the canonical involutions of $Q^+$ and $Q^-$ respectively.
Thus, the equivalence $(1)\Leftrightarrow(2)$ follows from \cite[(3.1)]{haile}.
The equivalences $(2)\Leftrightarrow(3)$ and $(2)\Leftrightarrow(4)$ both follow from (\ref{bas} (3)) and \cite[Ch. III, (2.7)]{lam}.

Now, let $\car F=2$.
Then the equivalence $(1)\Leftrightarrow(2)$ follows from \cite[(6.6)]{mahmoudi2}.

$(1)\Rightarrow(3)$:
Let $e$ be a metabolic idempotent with respect to $\sigma$ and let $x=e-\sigma(e)$.
By (\ref{metab}), we have $x^2=1$.
Since $x\in\alt(A,\sigma)$, (\ref{x2}) implies that $x\in\alt(A,\sigma)^+$, hence $q_\sigma^+(x)=1$.

$(3)\Rightarrow(4)$:
Suppose that $q_\sigma^+(u)=1$ for some $u\in\alt(A,\sigma)^+$.
By (\ref{bas} (1)) and (\ref{bas} (2)), the element $u$ extends to an orthogonal basis $(u,v,w)$ of $\alt(A,\sigma)^+$ with $w=uv$.
According to (\ref{q} (2)), $Q^+$ is commutative.
 Thus, $q_\sigma^+(v+w)=(v+w)^2=v^2+(uv)^2=0$, i.e., $q_\sigma^+$ is isotropic.

$(4)\Rightarrow(2)$:
If $q_\sigma^+$ is isotropic, then there exists a nonzero $x\in\alt(A,\sigma)^+\subseteq Q^+$ such that $x^2=0$, hence
 $Q^+$ splits.
\end{proof}

\begin{cor}\label{bayer}
Let $(A,\sigma)$ be a central simple algebra with involution over a field $F$.
If $\sigma$ is metabolic, then $\disc\sigma$ is trivial.
\end{cor}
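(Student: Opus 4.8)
The plan is to reduce the general statement to the biquaternion case handled by (\ref{hyp}), by passing to a suitable central simple algebra with involution built from $\sigma$. The key point is that metabolicity is preserved under the relevant operations, and that the discriminant can be detected after such a reduction.

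First I would recall that if $\car F \neq 2$, a metabolic orthogonal involution is hyperbolic (as noted before (\ref{metab}), citing \cite[(A.3)]{fluckiger}), and a hyperbolic orthogonal involution is adjoint to a hyperbolic bilinear form, whose discriminant is trivial; so that case is classical. The substantive case is $\car F = 2$. Here I would argue directly from the definition of the discriminant: by (\ref{metab}), if $e$ is a metabolic idempotent then $x := e - \sigma(e)$ satisfies $x^2 = 1$, and $x \in \alt(A,\sigma)$ is a unit. Hence $\disc \sigma = (-1)^m \nrd_A(x) F^{\times 2}$ where $\deg A = 2m$. Since $x^2 = 1$, we get $\nrd_A(x)^2 = \nrd_A(x^2) = \nrd_A(1) = 1$, so $\nrd_A(x) = \pm 1 \in F^{\times 2}$ (in characteristic $2$, $-1 = 1$; in general $\pm1$ are squares). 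Likewise $(-1)^m$ is trivially handled: it is $\pm 1$, a square in characteristic $2$, and in odd characteristic we have already dispatched that case. Therefore $\disc \sigma$ is trivial.

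Actually, the cleanest uniform argument avoids the case split on characteristic entirely: for \emph{any} central simple algebra with orthogonal involution, the discriminant is computed as $(-1)^m \nrd_A(x) F^{\times 2}$ for any alternating unit $x$ (independence of the choice is \cite[(7.1)]{knus}). A metabolic idempotent $e$ always yields, via (\ref{metab}), an alternating unit $x = e - \sigma(e)$ with $x^2 = 1$. Then $\nrd_A(x)^2 = \nrd_A(x^2) = 1$ forces $\nrd_A(x) \in \{1,-1\} \subseteq F^{\times 2}$. The remaining factor $(-1)^m$ is not automatically a square, so the one genuine subtlety is to show it disappears: in characteristic $2$ it equals $1$; in characteristic $\neq 2$, a metabolic orthogonal involution is hyperbolic, so $A \cong M_{2m}(F)$ up to Brauer equivalence carries a hyperbolic form of rank $2m$, whose discriminant $(-1)^m \cdot ((-1)^m) F^{\times 2}$ (the determinant of a hyperbolic bilinear form of rank $2m$ being $(-1)^m$) is trivial. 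So the only mild obstacle is correctly bookkeeping the sign $(-1)^m$ against the reduced norm of the hyperbolic form; everything else is immediate from (\ref{metab}) and the definition of the discriminant.

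I would write it as: invoke (\ref{metab}) to produce an alternating unit $x$ with $x^2 = 1$; compute $\nrd_A(x) \in F^{\times 2}$; then observe that $(-1)^m$ is absorbed, either because $\car F = 2$ or because in the hyperbolic case the full product is a square. This is short and self-contained, relying only on results already in the excerpt together with the standard fact that metabolic $=$ hyperbolic in the orthogonal case outside characteristic $2$.
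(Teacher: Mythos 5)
Your proof is correct and is essentially the paper's: in characteristic $2$ you use Lemma (\ref{metab}) to produce the alternating unit $x=e-\sigma(e)$ with $x^2=1$ and compute $\nrd_A(x)=1$, and in characteristic $\neq 2$ you reduce to the hyperbolic case and invoke the classical triviality of the discriminant of a hyperbolic involution, which is precisely what the paper's citation of \cite[(2.3)]{fluckiger} supplies. One caution: the parenthetical claim that $\{1,-1\}\subseteq F^{\times 2}$ ``in general'' is false when $\car F\neq 2$; the correct statement is that $\nrd_A(x)=(-1)^m$ exactly (the rank of a metabolic idempotent is $m$), so that the product $(-1)^m\nrd_A(x)$ is a square --- your characteristic split makes this slip harmless, but it should not appear in the final write-up.
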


\begin{proof}
The result follows from (\ref{metab}) if $\car F=2$ and
\cite[(2.3)]{fluckiger} if $\car F\neq2$.
\end{proof}

\begin{prop}\label{final}
Let $(A,\sigma)$ be a biquaternion algebra with involution of ortho\-gonal type over a field $F$.
Then $\sigma$ is metabolic if and only if there exists $u\in\alt(A,\sigma)$ such that $u^2=1$.
\end{prop}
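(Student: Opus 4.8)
The plan is to treat the two implications separately. The forward implication is essentially one line: if $\sigma$ is metabolic, choose a metabolic idempotent $e\in A$ and put $u=e-\sigma(e)$. Then $u\in\alt(A,\sigma)$ by the definition of $\alt(A,\sigma)$, and $u^2=1$ by (\ref{metab}). So the only real content is the converse.

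For the converse, suppose $u\in\alt(A,\sigma)$ satisfies $u^2=1$; note that $u$ is a unit (with $u^{-1}=u$) and that $\sigma(u)=-u$, since writing $u=a-\sigma(a)$ gives $\sigma(u)=\sigma(a)-a=-u$. If $\car F\neq2$, I would simply exhibit a hyperbolic idempotent: set $e=\frac{1}{2}(1+u)$. Then $e^2=\frac{1}{4}(1+2u+u^2)=\frac{1}{4}(2+2u)=e$ and $\sigma(e)=\frac{1}{2}(1+\sigma(u))=\frac{1}{2}(1-u)=1-e$, so $e$ is hyperbolic with respect to $\sigma$. Hence $(A,\sigma)$ is hyperbolic and, in particular, metabolic, and no further machinery is required in this case.

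The case $\car F=2$ is where the work lies, and the strategy is to reduce to (\ref{hyp}). First I would verify that $\disc\sigma$ is trivial: since $u$ is a unit in $\alt(A,\sigma)$ and $\deg A=4$, one has $\disc\sigma=\nrd_A(u)F^{\times2}$, and multiplicativity of the reduced norm gives $\nrd_A(u)^2=\nrd_A(u^2)=\nrd_A(1)=1$, so $\nrd_A(u)=1$ because $\car F=2$. Therefore $(A,\sigma)$ is decomposable by \cite[(3.7)]{parimala}, which makes the pfaffian $q_\sigma$, its adjoint $p_\sigma$, the subspaces $\alt(A,\sigma)^{\pm}$ of (\ref{not}), and (\ref{hyp}) all available. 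Since $u^2=1\in F$, (\ref{x2}) places $u$ in $\alt(A,\sigma)^+\cup\alt(A,\sigma)^-$, which equals $\alt(A,\sigma)^+$ in characteristic $2$; hence $p_\sigma(u)=u$ and $q_\sigma^+(u)=q_\sigma(u)=u\,p_\sigma(u)=u^2=1$. Thus $1\in D_F(q_\sigma^+)$, and the implication $(3)\Rightarrow(1)$ of (\ref{hyp}) shows that $\sigma$ is metabolic.

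The main obstacle is exactly the characteristic $2$ step: one cannot divide by $2$ to split off the idempotent $\frac{1}{2}(1+u)$, and producing a metabolic idempotent directly from $u$ is not transparent, so the argument is forced to detour through decomposability and the pfaffian criterion (\ref{hyp}); the triviality of $\disc\sigma$ is the small lemma that makes this detour legitimate. A uniform treatment is also possible: in characteristic $\neq2$ one may instead obtain $\disc\sigma=F^{\times2}$ by passing to a splitting field, where the $\pm1$-eigenspaces of $u$ (which is skew and satisfies $u^2=1$) are totally isotropic for the adjoint symmetric form, hence each $2$-dimensional, forcing $\nrd_A(u)=1$; then the same $q_\sigma$-argument applies in all characteristics.
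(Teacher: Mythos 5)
Your proof is correct, and it reaches the same destination by a partly different route. In the forward direction you extract $u=e-\sigma(e)$ from a metabolic idempotent and invoke (\ref{metab}) directly, whereas the paper routes through (\ref{bayer}) and then reads off an element with $u^2=1$ from conditions $(3)$--$(4)$ of (\ref{hyp}); your version is shorter and makes the same computation that already appears inside the proof of (\ref{hyp}), step $(1)\Rightarrow(3)$. In the converse, the paper treats both characteristics uniformly: it asserts that $\disc\sigma=\nrd_A(u)F^{\times2}$ is trivial, deduces decomposability from \cite[(3.7)]{parimala}, and applies (\ref{x2}) and (\ref{hyp}). You instead split cases: for $\car F\neq2$ you bypass the whole pfaffian machinery by exhibiting the hyperbolic idempotent $e=\tfrac12(1+u)$, which is more elementary and self-contained; for $\car F=2$ you follow the paper's path but, usefully, justify the triviality of the discriminant ($\nrd_A(u)^2=1$ forces $\nrd_A(u)=1$ in characteristic $2$), a point the paper leaves implicit. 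Your closing remark with the eigenspace argument on a splitting field is exactly what is needed to make the paper's uniform version of that discriminant claim rigorous in characteristic $\neq2$, so the two approaches reconcile completely. No gaps.
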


\begin{proof}
If $\sigma$ is metabolic, then by (\ref{bayer}), $\disc\sigma$ is trivial.
Thus, $\sigma$ is decomposable and the result follows from (\ref{hyp}).
Conversely, suppose that there exists $u\in\alt(A,\sigma)$ such that $u^2=1$.
Then $\disc\sigma=\nrd_A(u)F^{\times2}$ is trivial, so $(A,\sigma)$ is decomposable by \cite[(3.7)]{parimala}.
Since $u^2=1\in F$ and $u\in\alt(A,\sigma)$, by (\ref{x2}) we have $u\in\alt(A,\sigma)^+\cup\alt(A,\sigma)^-$.
Therefore, either $u\in\alt(A,\sigma)^+$ (i.e., $q_\sigma^+(u)=1$) or $u\in\alt(A,\sigma)^-$ (i.e., $q_\sigma^-(u)=-1$).
By (\ref{hyp}), $\sigma$ is metabolic.
\end{proof}

\begin{prop}\label{m4}
Let $(A,\sigma)$ be a decomposable biquaternion algebra with ortho\-gonal involution over a field $F$.
Then $(A,\sigma)\simeq(M_4(F),t)$ if and only if $q_\sigma^+\simeq\langle-1,-1,-1\rangle_q$ and $q_\sigma^-\simeq\langle1,1,1\rangle_q$.
\end{prop}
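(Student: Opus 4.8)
The plan is to reduce the whole statement to one computation --- that of the pair of pfaffian forms $q_t^+$ and $q_t^-$ attached to the transpose involution $t$ on $M_4(F)$ --- and then to feed the answer into the classification results (\ref{charn}) (when $\car F\neq2$) and (\ref{char}) (when $\car F=2$). \emph{Step 1: the forms $q_t^\pm$.} First, $(M_4(F),t)$ is decomposable: an invertible alternating matrix has reduced norm a square (its pfaffian squared), so $\disc t$ is trivial; equivalently $(M_4(F),t)\simeq(M_2(F),T_1)\otimes(M_2(F),T_1)$, since the transpose on $M_4(F)=M_2(F)\otimes M_2(F)$ is $T_1\otimes T_1$. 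If $\car F=2$, plugging this decomposition into (\ref{bas} (4)), together with $\disc T_1=1\cdot F^{\times2}$, gives at once $q_t^+\simeq\langle1,1,1\rangle_q=\langle-1,-1,-1\rangle_q$ (and $q_t^-=q_t^+$). If $\car F\neq2$, then $\alt(M_4(F),t)$ is the space of skew-symmetric matrices, and I would exhibit the two $3$-dimensional subspaces $W_\pm$ defined by $v_{12}=\pm v_{34}$, $v_{13}=\mp v_{24}$, $v_{14}=\pm v_{23}$; a short matrix computation shows every $v\in W_+\cup W_-$ is square-central with $v^2=-(v_{12}^2+v_{13}^2+v_{14}^2)I$. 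By (\ref{x2}) the square-central alternating elements form $\alt(M_4(F),t)^+\cup\alt(M_4(F),t)^-$, a union of two $3$-dimensional subspaces; since a subspace lying in such a union lies in one of the two pieces and $W_+\cap W_-=0$, we get $\{W_+,W_-\}=\{\alt(M_4(F),t)^+,\alt(M_4(F),t)^-\}$. As $q_t(x)=x^2$ for $x\in\alt(M_4(F),t)^+$ and $q_t(x)=-x^2$ for $x\in\alt(M_4(F),t)^-$, reading the form off in the coordinates $(v_{12},v_{13},v_{14})$ gives $q_t^+\simeq\langle-1,-1,-1\rangle_q$ and $q_t^-\simeq\langle1,1,1\rangle_q$ (the labelling is forced, since $x^2$ is minus a sum of three squares on whichever of $W_\pm$ is $\alt(M_4(F),t)^+$; at any rate this ordered pair is unchanged if $q_t$ is replaced by $-q_t$).

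\emph{Step 2: the implication $\Rightarrow$.} Let $\phi\colon(A,\sigma)\iso(M_4(F),t)$. As $\phi$ is a ring isomorphism fixing $F$, it maps $\{x\in\alt(A,\sigma):x^2\in F\}$ bijectively onto $\{y\in\alt(M_4(F),t):y^2\in F\}$ and preserves the scalar $x^2$; by (\ref{x2}) this forces $\phi$ to carry the pair $\{\alt(A,\sigma)^+,\alt(A,\sigma)^-\}$ onto $\{\alt(M_4(F),t)^+,\alt(M_4(F),t)^-\}$. Since $q_\sigma^+(x)=x^2$ for $x\in\alt(A,\sigma)^+$, and $x\mapsto x^2$ on either target subspace is the form $\langle-1,-1,-1\rangle_q$ (for $\car F=2$, $\langle1,1,1\rangle_q$) by Step~1, we get $q_\sigma^+\simeq\langle-1,-1,-1\rangle_q$; and since $q_\sigma^-(x)=-x^2$ for $x\in\alt(A,\sigma)^-$, we get $q_\sigma^-\simeq\langle1,1,1\rangle_q$ (in characteristic $2$ these are the same statement).

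\emph{Step 3: the implication $\Leftarrow$.} Suppose $q_\sigma^+\simeq\langle-1,-1,-1\rangle_q$ and $q_\sigma^-\simeq\langle1,1,1\rangle_q$. If $\car F\neq2$, then (\ref{bas} (3)) gives $N_{Q^+}\simeq\langle1\rangle_q\perp(-1)\cdot q_\sigma^+\simeq\langle1,1,1,1\rangle_q$ and likewise $N_{Q^-}\simeq\langle1,1,1,1\rangle_q$; as a quaternion algebra is determined by its norm form and $\langle1,1,1,1\rangle_q$ is the norm form of $(-1,-1)_F$, both $Q^+$ and $Q^-$ are isomorphic to $(-1,-1)_F$, so $A\simeq(-1,-1)_F\otimes(-1,-1)_F\simeq M_4(F)$ (degree $4$, trivial Brauer class). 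Together with $q_\sigma^+\simeq\langle-1,-1,-1\rangle_q\simeq q_t^+$, the implication $(3)\Rightarrow(1)$ of (\ref{charn}) yields $(A,\sigma)\simeq(M_4(F),t)$. If $\car F=2$, write $(A,\sigma)\simeq(Q_1,\sigma_1)\otimes(Q_2,\sigma_2)$; by (\ref{bas} (4)) the relation $q_\sigma^+\simeq\langle1,1,1\rangle_q$ forces $\disc\sigma_1$ and $\disc\sigma_2$ to be trivial (compare the value sets of the two totally singular ternary forms), and a quaternion algebra carrying an orthogonal involution of trivial discriminant is split --- its $1$-dimensional space of alternating elements is $Fu$ with $u\notin F$ and $u^2=a^2$ for some $a\in F^\times$, so $(u+a)^2=0$ is a nonzero nilpotent --- hence $A\simeq M_4(F)$. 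Now $q_\sigma^+\simeq\langle1,1,1\rangle_q\simeq q_t^+$, and the implication $(3)\Rightarrow(1)$ of (\ref{char}) finishes the proof.

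The delicate point is Step~1: one needs $q_t^+$ and $q_t^-$ on the nose, not merely up to similarity or Witt equivalence, and with the $+/-$ assignment correct. The self-dual/anti-self-dual subspaces $W_\pm$ are what make this transparent, and it is crucial that on them $v^2$ is literally $-(v_{12}^2+v_{13}^2+v_{14}^2)I$, so that the form comes out as $\langle-1,-1,-1\rangle_q$ rather than a merely isometric-looking diagonalization obtained by extending some unit to a basis.
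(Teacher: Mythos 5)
Your proof is correct, and while its skeleton (compute $q_t^{\pm}$ for the transpose, then invoke the classification theorems (\ref{charn}) and (\ref{char})) matches the paper's, several components are executed differently. For $\car F\neq2$ the paper simply quotes a quaternion basis $(1,u,v,uv)$ of $Q(M_4(F),t)^+$ with $u^2=v^2=-1$ from Jacobson and applies (\ref{bas}~(2)); your explicit self-dual/anti-self-dual subspaces $W_\pm$ of skew-symmetric matrices carry out the same computation in a self-contained way, and your verification that $v^2=-(v_{12}^2+v_{13}^2+v_{14}^2)I$ on both pieces is right. For $\car F=2$ the paper does not go through (\ref{char}) at all: it cites \cite[(5.7)]{mahmoudi2} together with (\ref{pfq}); your route via the decomposition $(M_2(F),T_1)\otimes(M_2(F),T_1)$ and (\ref{bas}~(4)), plus the value-set argument showing that $q_\sigma^+\simeq\langle1,1,1\rangle_q$ forces both discriminants trivial and hence $A$ split, is a valid and more elementary alternative. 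Your Step~2 replaces the appeal to (\ref{jay}) by a direct argument with square-central elements and the identities $q_\sigma^+(x)=x^2$, $q_\sigma^-(x)=-x^2$; this has the merit of being manifestly insensitive to the sign ambiguity in the choice of $q_\sigma$ (and to the $+/-$ labelling of $W_\pm$), which is exactly the delicate point you identify. Finally, your Step~3 supplies a detail the paper leaves implicit: condition $(3)$ of (\ref{charn}) requires $A\simeq M_4(F)$, which you extract from the hypotheses via the norm forms $N_{Q^{\pm}}\simeq\langle1,1,1,1\rangle_q$ in characteristic $\neq2$; one could get this slightly faster by noting that $q_\sigma\simeq q_\sigma^+\perp q_\sigma^-\simeq\langle-1,-1,-1,1,1,1\rangle_q$ is a hyperbolic Albert form, but your argument is equally sound.
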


\begin{proof}
If $\car F=2$, the result follows from \cite[(5.7)]{mahmoudi2} and (\ref{pfq}).
Suppose that $\car F\neq2$.
As observed in \cite[p. 235]{jacob}, $Q(M_4(F),t)^+$ has an $F$-basis $(1,u,v,w)$ subject to the relations
$u^2=-1$, $v^2=-1$ and $w=uv=-vu$.
By (\ref{bas} (2)) we obtain $q_t^+\simeq\langle-1,-1,-1\rangle_q$.
A similar argument shows that $q_t^-\simeq\langle1,1,1\rangle_q$.
Thus, the result follows from (\ref{charn}).
\end{proof}

\footnotesize

\scriptsize
A.-H. Nokhodkar, {\tt
    a.nokhodkar@kashanu.ac.ir}\\
Department of Pure Mathematics, Faculty of Science, University of Kashan, P.~O. Box 87317-53153, Kashan, Iran.
\end{document}